\newtheorem{theorem}{Theorem}[section]
\newtheorem{definition}[theorem]{Definition}
\newtheorem{corollary}[theorem]{Corollary}
\newtheorem{observation}[theorem]{Observation}
\newtheorem{example}{Example}[section]
\newtheorem{proposition}[theorem]{Proposition}
\newtheorem*{propositionEquilizable}{Theorem~\ref{thm:equilizable}}
\newtheorem{lemma}[theorem]{Lemma}
\newtheorem{remark}{Remark}
\newcommand{\bbone}{{\boldsymbol{1}}}
\newcommand{\bbzero}{{\boldsymbol{0}}}
\DeclareMathOperator{\Sp}{Sp}
\title{Decomposing $1$-Sperner hypergraphs}
\author{Endre Boros\\
\small MSIS Department and RUTCOR, Rutgers University, New Jersey, USA\\
\small 100 Rockafeller Rd, Piscataway NJ 08854, USA\\
\small \texttt{Endre.Boros@rutgers.edu}\\
\and
Vladimir Gurvich\\
\small National Research University: Higher School of Economics, Moscow, Russia\\
\small \texttt{vgurvich@hse.ru}\\
\and
Martin Milani\v c\\
\small University of Primorska, UP IAM, Muzejski trg 2, SI6000 Koper, Slovenia\\
\small University of Primorska, UP FAMNIT, Glagolja\v ska 8, SI6000 Koper, Slovenia\\
\small \texttt{martin.milanic@upr.si}}
\date{\today}
\begin{document}

\maketitle

\begin{abstract}
A hypergraph is \emph{Sperner} if no hyperedge contains another one.
A Sperner hypergraph is \emph{equilizable} (resp., \emph{threshold}) if the characteristic vectors of its hyperedges are the (minimal) binary solutions to a linear equation (resp., inequality) with positive coefficients.
These combinatorial notions have many applications and are motivated by the theory of Boolean functions and integer programming.
We introduce in this paper the class of \emph{$1$-Sperner} hypergraphs, defined by the property that for every two hyperedges the smallest of their two set differences is of size one. We characterize this class of Sperner hypergraphs by a decomposition theorem and derive several consequences from it. In particular, we obtain bounds on the size of $1$-Sperner hypergraphs and their transversal hypergraphs, show that the characteristic vectors of the hyperedges are linearly independent over the reals, and prove that $1$-Sperner hypergraphs are both threshold and equilizable. The study of $1$-Sperner hypergraphs is motivated also by their applications in graph theory, which we present in a companion paper.
\end{abstract}

\section{Introduction}

In this paper we consider various classes of hypergraphs, with a focus on the
newly introduced class of $1$-Sperner hypergraphs. As we will see, this is an interesting and useful notion with many surprising properties, including a simple recursive structure. Before we explain and motivate our study and results, we  overview the necessary background definitions.

\subsection{Background}\label{sec:background}

A \emph{hypergraph} ${\cal H}$ is a pair $(V, E)$ where $V = V({\cal H})$ is a finite set of \emph{vertices} and $E = E({\cal H})$ is a set of subsets of $V$, called \emph{hyperedges}~\cite{MR1013569}. Given a positive integer $k$, a hypergraph $\mathcal{H}$ is said to be \emph{$k$-uniform} if $|e| = k$ for all $e\in E(\mathcal{H})$, and
\emph{uniform} if it is $k$-uniform for some $k$.
In particular, the (finite, simple, and undirected) graphs are precisely the $2$-uniform hypergraphs.
Four properties of hypergraphs will be particularly relevant for our study: Sperner, threshold, equilizable, and dually Sperner hypergraphs.

\bigskip
\noindent{\bf Sperner hypergraphs.} A hypergraph is said to be \emph{Sperner} if no hyperedge contains another one, that is, if $e,f\in E$ and $e\subseteq f$ implies $e = f$; see, e.g.,~\citet{MR1544925,MR0258642,MR0396277}. Sperner hypergraphs were studied in the literature under different names
including \emph{simple hypergraphs} by~\citet{MR1013569},
\emph{clutters} by~Billera~\cite{MR0307924,MR0307923} and by Edmonds and Fulkerson~\cite{MR0269433,MR0255235}, and \emph{coalitions} in the game theory literature~\cite{MR0219323}. See also~\cite{MR719998} for additional references on applications of Sperner hypergraphs in other areas of mathematics.

\bigskip
\noindent{\bf Threshold hypergraphs.} A hypergraph ${\cal H}= (V, E)$ is said to be \emph{threshold} if there exist a non-negative integer weight function $w:V\to \mathbb{Z}_{\ge 0}$ and a non-negative integer threshold $t\in \mathbb{Z}_{\ge 0}$ such that for every subset $X\subseteq V$, we have
$w(X):= \sum_{x\in X}w(x)\ge t$ if and only if $e\subseteq X$ for some $e\in E$. A pair $(w,t)$ as above will be referred to as a \emph{threshold separator} of $\mathcal{H}$.
The mapping that takes every hyperedge $e\in E$ to its \emph{characteristic vector} $\chi^e\in \{0,1\}^V$, defined by
\[
\chi^e_v = \left\{
\begin{array}{ll}
1, & \hbox{if $v\in e$} \\
0, & \hbox{otherwise\,,}
\end{array}
\right.
\]
shows that the sets of hyperedges of threshold Sperner hypergraphs are in a one-to-one correspondence with the sets of minimal feasible binary solutions of the linear inequality $w^\top x\ge t$. A set of vertices $X\subseteq V$ in a hypergraph is said to be \emph{independent} if it does not contain any hyperedge, and \emph{dependent} otherwise. Thus, threshold hypergraphs are exactly the hypergraphs admitting a linear function on the vertices separating the characteristic vectors of the independent sets from the characteristic vectors of dependent sets.

Threshold hypergraphs were defined in the uniform case by Golumbic~\cite{MR562306} and studied further by Reiterman et al.~\cite{MR791660}. The $2$-uniform threshold hypergraphs are precisely the threshold graphs, introduced by~\citet{MR0479384} and studied afterwards in numerous papers; see also the monograph by~\citet{MR1417258}. In their full generality (that is, without the restriction that the hypergraph is uniform), the concept of threshold hypergraphs is equivalent to that of threshold monotone Boolean functions. Threshold Boolean functions provide a simple but fundamental model for many questions investigated in a variety of areas including electrical engineering, artificial intelligence, game theory, cryptography, and many others; see, e.g.,~\citet{MR2184067}, \citet{MR2723262,MR2742439}, and~\citet{MR0439441}.

\begin{sloppypar}
Close interrelations between hypergraphs and monotone Boolean functions are often useful in the study of threshold and other hypergraphs, allowing for the transfer and applications of results from the theory of Boolean functions; see~\cite{MR2742439}. For example, a polynomial-time recognition algorithm for threshold monotone Boolean functions represented by their complete DNF was given by Peled and Simeone~\cite{MR798011}. The algorithm is based on linear programming and implies a polynomial-time recognition algorithm for threshold hypergraphs. To the best of our knowledge, no `purely combinatorial' polynomial-time recognition algorithm for threshold hypergraphs is known~\cite{MR2742439}.\footnote{See \citet{Smaus} for an attempt.}
\end{sloppypar}

\bigskip
\noindent{\bf Equilizable hypergraphs.} Replacing a linear inequality with positive coefficients by a linear equation maps the notion of threshold hypegraphs to the notion of equilizable hypergraphs.
A hypergraph $\mathcal{H} = (V,E)$ is said to be \emph{equilizable} if there exist a (strictly) positive integer weight function $w:V\to \mathbb{Z}_{>0}$ and a non-negative integer threshold $t\in \mathbb{Z}_{\ge 0}$ such that for every subset $X\subseteq V$, we have $w(X) = t$ if and only if $X\in E$.

Depending on the context, one may want to relax the assumption that all the weights are strictly positive to allow zero weights. However, we find the assumption of strictly positive weights useful for our study; in particular, it implies the following.

\begin{proposition}\label{obs:equilizable-Sperner}
Every equilizable hypergraph is Sperner.
\end{proposition}

\begin{proof}
Let $\mathcal{H} = (V,E)$ be an equilizable hypergraph and let
$w:V\to \mathbb{Z}_{>0}$ and $t\in \mathbb{Z}_{\ge 0}$ be such that for every subset $X\subseteq V$, we have $w(X) = t$ if and only if $X\in E$.
Suppose for a contradiction that $\mathcal{H}$ is not Sperner.
Then, there exist two hyperedges $e$ and $f$ of  $\mathcal{H}$ such that
$e\subset f$. Consequently, $w(e) = t = w(f)$, which implies that
$w(f\setminus e) = 0$, contrary to the fact that $w$ is strictly positive.
\end{proof}

Equilizable hypergraphs are a very natural family. The sets of hyperedges of an equilizable hypergraph are in a one-to-one correspondence with the sets of binary solutions to a linear equality of the form $w^\top x= t$ where $w\in \mathbb{Z}_{>0}^V$ and $t\in \mathbb{Z}_{\ge 0}$, that is, with the sets of binary vectors that a single hyperplane with positive coefficients can cut out from the hypercube. It is thus not surprising that properties of equilizable hypergraphs are fundamental in integer (or binary) programming. In particular, an old result of Mathews~\cite{MR1576653} shows how to reduce two linear Diophantine equations with strictly positive coefficients within an arbitrary set to a single equivalent linear equation of the same type. Motivated by integer programming considerations, many authors generalized Mathews' result in a variety of ways, see~\citet{MR0373595},~\citet{MR0303937},~\citet{EW},~\citet{MR0406500},~\citet{MR0314456}, and~\citet{MR0386688}.\footnote{In some sense, these equation aggregation results are not unexpected. The intersection of two hyperplanes in $\mathbb{R}^n$ is an $(n-2)$-dimensional subspace ${\cal F}$. Any integer point not included in ${\cal F}$ extends ${\cal F}$ to a unique hyperplane (single equality). Since in a bounded region there are only finitely many such feasible integer points, there are only finitely many hyperplanes through ${\cal F}$ that contain an integer point not contained in ${\cal F}$. Thus we must have infinitely many hyperplanes containing ${\cal F}$ that do not contain any other integer feasible point. The only nontrivial part is the numerical construction of an explicit hyperplane. All constructions from~\cite{MR0303937,MR0373595,MR0314456,EW,MR0386688,MR1576653,MR0406500} end up introducing exponentially growing coefficients.}
Furthermore, Mathews' result implies that the class of equilizable hypergraphs on a given vertex set is closed under intersection.\footnote{The intersection of two hypergraphs $\mathcal{H}_1 = (V_1,E_1)$ and $\mathcal{H}_2 = (V_2,E_2)$ is defined in the natural way, namely as the hypergraph $(V_1\cap V_2,E_1\cap E_2)$.}
A related question about linear inequalities led~\citet{MR0479384} to the introduction of threshold graphs.

Equilizable hypergraphs can also be seen as a generalization of the class of equistable graphs, defined as follows. A \emph{stable set} (or: \emph{independent set}) in a graph $G$ is a set of pairwise non-adjacent vertices.
A graph $G = (V,E)$ is said to be \emph{equistable} if there exist a  (strictly) positive integer weight function $w:V\to \mathbb{Z}_{>0}$ and a non-negative integer threshold $t$ such that for every subset $X\subseteq V$, we have $w(X) = t$ if and only if $X$ is an (inclusion-)maximal stable set of $G$. Equistable graphs were introduced in 1980 by~\citet{MR553649}, who proved that every threshold graph is equistable. While equistable graphs were originally defined using a function $\varphi:V\to \mathbb{R}_{\ge 0}$ such that for every subset $X\subseteq V$, we have $\varphi(X) = 1$ if and only if $X$ is a maximal stable set of $G$, it is not difficult to see that the above two definitions are equivalent. Equistable graphs were studied in a series of papers~\cite{MR3623393,Korach-Peled-Rotics,MPS13,MR3474710,MM11,MR3575013,Pel_Rot,Lev_Mil}. However, unlike threshold graphs, the structure of equistable graphs is not understood and the complexity of the problem of recognizing equistable graphs is open. The connection between equistable graphs and equilizable hypergraphs can be easily explained using the notion of stable set hypergraphs. The \emph{stable set hypergraph} of a graph $G$ is the hypergraph $\mathcal{S}(G)$ with vertex set $V(G)$ and in which the hyperedges are exactly the maximal stable sets of $G$. Clearly, a graph $G$ is equistable if and only if its stable set hypergraph is equilizable.

\bigskip
\noindent{\bf Dually Sperner hypergraphs.}
Sperner hypergraphs can be equivalently defined as the hypergraphs such that every two distinct hyperedges $e$ and $f$ satisfy
\begin{equation}\label{eq1}
\min\{|e\setminus f|,|f\setminus e|\}\ge 1\,.
\end{equation}
This observation motivated Chiarelli and Milani\v c to call in~\cite{MR3281177} a hypergraph ${\cal H}$ \emph{dually Sperner} if
every two distinct hyperedges $e$ and $f$ satisfy
\begin{equation*}
\min\{|e\setminus f|,|f\setminus e|\}\le 1\,.
\end{equation*}
The following result was shown in~\cite{MR3281177}.

\begin{theorem}[Chiarelli-Milani\v c~\cite{MR3281177}]\label{thm:dually-Sperner}
Every dually Sperner hypergraph is threshold.
\end{theorem}


\subsection{The main definition}

The main notion studied in this paper is given by the following.

\begin{definition}
Given a positive integer $k$, we say that a hypergraph $\mathcal{H}$ is \emph{$k$-Sperner} if every two distinct hyperedges $e$ and $f$ satisfy $$1\le \min\{|e\setminus f|,|f\setminus e|\}\le k\,.$$
In particular, $\mathcal{H}$ is \emph{$1$-Sperner} if every two distinct hyperedges $e$ and $f$ satisfy $$\min\{|e\setminus f|,|f\setminus e|\}= 1\,,$$
or, equivalently, if, for any two distinct hyperedges $e$ and $f$ of $\mathcal{H}$ with $|e| \leq |f|$, we have $|e\setminus f| = 1$.
\end{definition}

Denoting by $\mathcal{S}_k$ the class of all $k$-Sperner hypergraphs and by $\mathcal{S}$ the class of all Sperner hypergraphs,
it is clear that these families of hypergraphs are related by
the following chain of inclusions
$$\mathcal{S}_1\subseteq \mathcal{S}_2 \subseteq \ldots \subseteq \bigcup_{k\ge 1}\mathcal{S}_k = \mathcal{S}\,.$$
The inclusions follow immediately from the definitions, while
the equality follows from~\eqref{eq1}.
Moreover, since we do not allow multiple hyperedges, every $2$-uniform hypergraph (that is, a graph) is $k$-Sperner for every $k\ge 2$.
We should therefore not expect useful decomposition properties for the classes of $k$-Sperner hypergraphs for $k\ge 2$. We focus in this paper on the case $k = 1$ and show that hypergraphs in the corresponding subfamily $\mathcal{S}_1$ have a nice structure. Note that by definition, all hypergraphs with at most one hyperedge (possibly with no vertices) are $1$-Sperner. Note also that
a hypergraph is $1$-Sperner if and only if it is both Sperner and dually Sperner.

\begin{sloppypar}
The concept of $1$-Sperner hypergraphs already appeared in some graph theoretical research. \citet{MR3281177,ISAIM2014} made use of dually Sperner hypergraphs to characterize two classes of graphs defined by the following properties: every induced subgraph has a non-negative linear vertex weight function separating the characteristic vectors of all total dominating sets~\cite{ISAIM2014}, resp.~connected dominating sets~\cite{MR3281177}, from the characteristic vectors of all other sets. Due to the close relation between $1$-Sperner and dually Sperner hypergraphs (see Observation~\ref{obs:Sp-red}), all the results from~\cite{MR3281177,ISAIM2014} can be equivalently stated using $1$-Sperner hypergraphs. In particular, the results of the extended abstract~\cite{ISAIM2014} are stated using the $1$-Sperner property in the full version of the paper~\cite{CM-ISAIM2014}.
\end{sloppypar}

\subsection{Our results}

Our main result is a decomposition theorem for $1$-Sperner hypergraphs.
We also derive several consequences of it.

\bigskip
\noindent{\bf The decomposition theorem.} We define a simple operation on hypergraphs called gluing and show that it produces (with only one small exception) a new $1$-Sperner hypergraph from a given pair of $1$-Sperner hypergraphs; see Fig.~\ref{fig:1} for an example illustrated with incidence matrices. Conversely, we show that every $1$-Sperner hypergraph with at least one vertex is the gluing of two smaller $1$-Sperner hypergraphs; see Theorem~\ref{thm:decomposition}.

\bigskip
\noindent{\bf Consequences.} We use the decomposition theorem to prove the following properties of $1$-Sperner hypergraphs.
\begin{enumerate}[a)]
\item Every $1$-Sperner hypergraph is threshold and has a positive threshold separator; see Theorem~\ref{thm:1-Sperner-threshold}. In particular, this gives a new, constructive proof of the fact that every dually Sperner hypergraph is threshold, obtained first by~\citet{MR3281177}; see Theorem~\ref{thm:dually-Sperner}.

\item Every $1$-Sperner hypergraph is equilizable; see Theorem~\ref{thm:equilizable}.

\item The characteristic vectors of the hyperedges of a $1$-Sperner hypergraph are linearly independent over the reals; see Theorem~\ref{prop:linear-independence}.
    This implies that the number of hyperedges cannot exceed the number of vertices, thus giving a sharp upper bound on the size of a $1$-Sperner hypergraph in terms of its order; see~\Cref{cor:edges}.
    We also give a sharp lower bound on the size of a $1$-Sperner hypergraph without universal, isolated, and twin vertices, in terms of its order; see~\Cref{prop:lower-bound}.

\item The number of minimal transversals of a $1$-Sperner hypergraph is bounded from above by a quadratic function of its order and they can be efficiently generated; see Theorem~\ref{thm:min-transversal-of-1-Sperner}.
\end{enumerate}

Our study of $1$-Sperner hypergraphs is motivated not only by their nice combinatorial properties but also by their numerous applications in graph theory. Some of them were already mentioned above and we obtained several others. To keep the length of this paper reasonable, we decided to present those results in a separate paper~\cite{BGM-1-Sperner-graphs}. We briefly summarize them here.

\begin{sloppypar}
We use the characterizations of so-called threshold and domishold graphs in terms of forbidden induced subgraphs due to~\citet{MR0479384} and~\citet{MR0491342}, respectively, to derive further characterizations of these graph classes in terms of $1$-Spernerness, thresholdness, and $2$-asummability properties of several related hypergraphs, namely their vertex cover, clique, independent set, dominating set, and closed neighborhood hypergraphs.
\end{sloppypar}

Furthermore, we use the decomposition theorem for $1$-Sperner hypergraphs (Theorem~\ref{thm:decomposition}) to derive decomposition theorems for four classes of graphs, namely two classes of split graphs, a class of bipartite graphs, and a class of cobipartite graphs. These decomposition theorems are based on certain matrix partitions of the corresponding graphs and
give rise to new classes of graphs of bounded clique-width and to new polynomially solvable cases of variants of domination.

\subsection{Interrelations between the considered classes of hypergraphs}

In Fig.~\ref{fig:Hasse}, we show the Hasse diagram of the partial order of the hypergraph classes studied in this paper, ordered with respect to inclusion.

\begin{figure}[h!]
  \centering
   \includegraphics[width=0.87\textwidth]{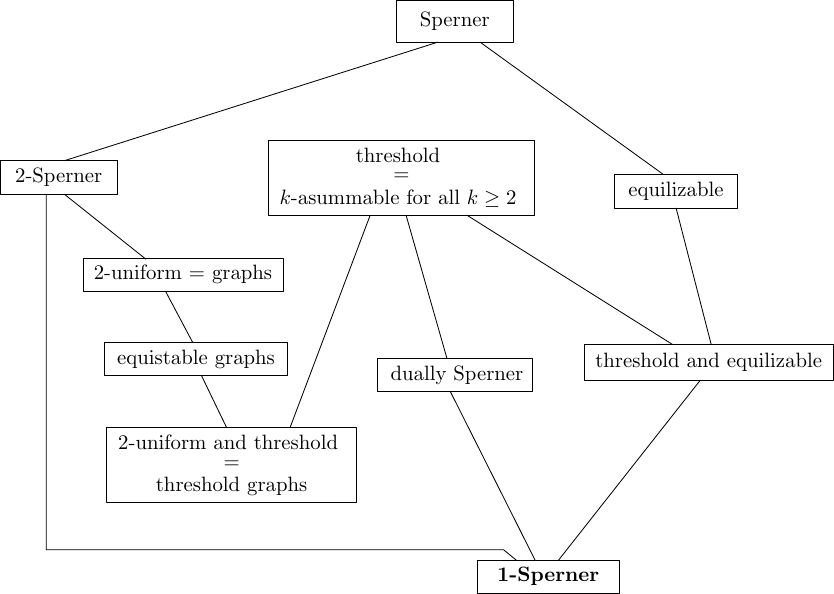}
  \caption{Inclusion relations between several classes of hypergraphs.}\label{fig:Hasse}
\end{figure}

\begin{sloppypar}
The fact that every $1$-Sperner hypergraph is threshold and equilizable
is proved in Theorems~\ref{thm:1-Sperner-threshold} and~\ref{thm:equilizable}, respectively.
The fact that every dually Sperner hypergraph is threshold was proved by~\citet{MR3281177}. The fact that every threshold graph is equistable was proved by~\citet{MR553649}. The fact that every equilizable hypergraph is Sperner was proved in~\Cref{obs:equilizable-Sperner}. The remaining inclusions are trivial.
\end{sloppypar}

Finally, the following examples show that all inclusions are strict and there are no other inclusions:
\begin{itemize}
  \item the complete graph $K_4$ is a $2$-uniform hypergraph that is threshold but not dually Sperner;
  \item the hypergraph with vertex set $\{1,2,3\}$ and hyperedge set $\{\{1,2,3\}\}$ is $1$-Sperner but not $2$-uniform;
  \item the hypergraph with vertex set $\{1\}$ and hyperedge set $\{\emptyset,\{1\}\}$ is dually Sperner but not Sperner,
 \item an equilizable hypergraph that is not threshold is presented in Example~\ref{example-equilizable-not-threshold};
\item a $2$-uniform threshold hypergraph that is not equilizable is presented in Example~\ref{example-threshold-not-equilizable};
\item a threshold and equilizable hypergraph that is neither dually Sperner nor $2$-Sperner is the complete $3$-uniform hypergraph $\mathcal{H}_{6,3}$; see Example~\ref{example-threshold-equilizable-not-1-Sperner};
 \item the cycle $C_4$ is an equistable graph that, when viewed as a $2$-uniform hypergraph, it is not threshold~\cite{MR0479384};
\item the path $P_4$ is a graph that is not equistable~\cite{MR553649}; moreover, it is also a Sperner hypergraph that is not equilizable.
\end{itemize}
The remaining non-inclusions follow by transitivity.

\medskip
\begin{sloppypar}
\noindent{\bf Structure of the paper.}
In \Cref{sec:preliminaries}, we collect the necessary definitions and preliminary results. We also consider several operations on hypergraphs and show that the class of $1$-Sperner hypergraphs is (almost always) closed under these operations. In Section~\ref{sec:uniform} we give a necessary condition for a uniform hypergraph to be $1$-Sperner and identify two families of uniform $1$-Sperner hypergraphs.
Building on the results of Sections~\ref{sec:preliminaries} and~\ref{sec:uniform}, we develop in Section~\ref{sec:decomposition} the composition theorem for $1$-Sperner hypergraphs. Various consequences of this theorem are examined in Section~\ref{sec:applications}.
\end{sloppypar}

\section{Definitions and hypergraph operations}\label{sec:preliminaries}

The \emph{order} and the \emph{size} of a hypergraph $\mathcal{H}$ refer to the number of its vertices, resp.~hyperedges.
Every hypergraph $\mathcal{H}=(V,E)$ with a fixed pair of orderings
of its vertices and edges, say
$V = \{v_1,\ldots,  v_n\}$,
and
$E = \{e_1,\ldots,  e_m\}$,
can be represented with its \emph{incidence matrix} $A^\mathcal{H}\in \{0,1\}^{E\times V}$ having rows and columns indexed by edges and vertices of $\mathcal{H}$, respectively,
and defined
as
\[
A^\mathcal{H}_{i,j} =
\left\{
\begin{array}{ll}
1, & \hbox{if $v_j\in e_i$;} \\
0, & \hbox{otherwise.}
\end{array}
\right.
\]
Note the slight abuse of notation above: the incidence matrix does not depend only on the hypergraph but also on the
pair of orderings of its vertices and edges. We will be able to neglect this technical issue often in the paper, but not always.
We will therefore say that two matrices $A$ and $B$ of the same dimensions are \emph{permutation equivalent}, and
denote this fact by $A\cong B$, if $A$ can be obtained from $B$ by permuting some of its rows and/or columns.
For later use, we state a simple property of incidence matrices of a hypergraph.

\begin{remark}\label{obs:matrices}
Let $\mathcal{H}=(V,E)$ be a hypergraph with a fixed pair of orderings
of its vertices and edges, respectively, and let $A^\mathcal{H}$ be the corresponding incidence matrix.
Then, any permutation of the vertices and/or edges of $\mathcal{H}$
results in an incidence matrix that is
permutation equivalent to $A^\mathcal{H}$.
Moreover, any matrix that is permutation equivalent to $A^\mathcal{H}$
is the incidence matrix of $\mathcal{H}$ with respect to some pair
of orderings of its vertices and edges.
\end{remark}

\medskip
\begin{sloppypar}
\noindent{\bf $k$-asummable hypergraphs.} A hypergraph is \emph{$k$-asummable} if it has no $k$ (not necessarily distinct) independent sets $A_1,\ldots, A_k$ and $k$ (not necessarily distinct) dependent sets $B_1,\ldots, B_k$ such that $$\sum_{i = 1}^k\chi^{A_i} = \sum_{i = 1}^k\chi^{B_i}\,.$$ A hypergraph is \emph{asummable} if it is $k$-asummable for every $k\ge 2$. The following characterization of threshold graphs follows from analogous characterizations of threshold monotone Boolean functions; see~\cite{MR2742439}.

\begin{theorem}[Chow~\cite{Chow} and Elgot~\cite{5397278}]\label{thm:ChowElgot}
A hypergraph is threshold if and only if it is asummable.
\end{theorem}

\medskip
Next we consider several operations on hypergraphs and
show that the class of $1$-Sperner hypergraphs is (almost always) closed under these operations.
\end{sloppypar}

\subsection{Hypergraph complementation}

Given a hypergraph $\mathcal{H}=(V,E)$, the \emph{complement} of $\mathcal{H}$ is the hypergraph $\overline{\mathcal{H}}$ with $V(\overline{\mathcal{H}}) = V$ and $E(\overline{\mathcal{H}}) = \{\overline e\mid e\in E(\mathcal{H})\}$,
fwhere $\bar e $ denotes  $V \setminus e$ for any subset $e\subseteq V$.

\begin{proposition}\label{prop:complement}
The complement of every $1$-Sperner hypergraph is $1$-Sperner.
\end{proposition}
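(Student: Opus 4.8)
The plan is to argue directly from the definition of $1$-Sperner, showing that complementation preserves the key quantity $\min\{|e\setminus f|,|f\setminus e|\}$. The central observation is that for any two subsets $e,f\subseteq V$ and their complements $\bar e = V\setminus e$, $\bar f = V\setminus f$, we have $\bar e\setminus \bar f = (V\setminus e)\cap f = f\setminus e$, and symmetrically $\bar f\setminus \bar e = e\setminus f$. Thus $|\bar e\setminus \bar f| = |f\setminus e|$ and $|\bar f\setminus \bar e| = |e\setminus f|$, so
\[
\min\{|\bar e\setminus \bar f|,|\bar f\setminus \bar e|\} = \min\{|f\setminus e|,|e\setminus f|\}\,.
\]
In particular, $e\neq f$ if and only if $\bar e\neq \bar f$, so the pairs of distinct hyperedges of $\overline{\cal H}$ are exactly the complements of pairs of distinct hyperedges of ${\cal H}$.

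Given this, the proof is immediate: let ${\cal H}=(V,{\cal E})$ be $1$-Sperner, and let $g,h$ be two distinct hyperedges of $\overline{\cal H}$. Then $g = \bar e$ and $h = \bar f$ for distinct $e,f\in {\cal E}$, and by the identity above together with the $1$-Sperner property of ${\cal H}$ we get $\min\{|g\setminus h|,|h\setminus g|\} = \min\{|e\setminus f|,|f\setminus e|\} = 1$. Hence $\overline{\cal H}$ is $1$-Sperner.

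I do not anticipate a genuine obstacle here; the only mildly delicate point is making sure the complementation map $e\mapsto \bar e$ is a bijection between ${\cal E}$ and $E(\overline{\cal H})$ so that "distinct hyperedges of $\overline{\cal H}$" correspond precisely to "distinct hyperedges of ${\cal H}$" — but this follows from the trivial fact that $\bar{\bar e} = e$. One could alternatively phrase the whole argument at the level of incidence matrices: complementing a hypergraph replaces its incidence matrix $A$ by $\bbone^{m,n} - A$ (up to the identification of Remark~\ref{obs:matrices}), and the quantities $|e_i\setminus e_j|$, $|e_j\setminus e_i|$ are the numbers of coordinates where row $i$ has a $1$ and row $j$ a $0$, respectively vice versa; these two counts simply swap under $A\mapsto \bbone^{m,n}-A$. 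Either formulation yields a one-line proof.
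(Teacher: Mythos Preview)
Your proof is correct and is essentially the same as the paper's: both rest on the identity $\bar e\setminus \bar f = f\setminus e$ (and its symmetric counterpart), from which the preservation of $\min\{|e\setminus f|,|f\setminus e|\}$ under complementation is immediate. The paper's version is just the one-line statement of this identity, while you have spelled out the surrounding details.
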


\begin{proof}
This follows directly from the definition, using the fact that for every two sets $e,f\subseteq V$, we have
$e\setminus f = \overline f\setminus \overline e$ and
$f\setminus e = \overline e\setminus \overline f$.
\end{proof}

As the next example shows, the closure under complementation does not hold for the classes of threshold Sperner hypergraphs and $2$-asummable Sperner hypergraphs.

\begin{example}
Consider the $3$-uniform hypergraph $\mathcal{H} = (V,E)$ with $V = \{1,\ldots,6\}$ in which a set $e=\{x,y,z\}\subseteq V$ forms a hyperedge if and only if $e$ contains at least two elements of $\{1,2,3,4\}$.
Then $\mathcal{H}$ is a threshold hypergraph, with a threshold separator $(w,t)$ given by $(w(1),\ldots, w(6)) = (3,3,3,3,1,1)$ and $t = 7$. Since $\mathcal{H}$ is threshold, it is also $2$-asummable by Theorem~\ref{thm:ChowElgot}. Its complement is the hypergraph $\overline{\mathcal{H}} = (V,\overline{E})$ with $\overline{E} = \{e\subseteq V :|e| = 3, e\nsubseteq \{1,2,3,4\}\}$. Since in $\overline{\mathcal{H}}$, sets $A_1 = \{1,2,3,4\}$ and $A_2 = \{5,6\}$ are independent, while sets $B_1 = \{1,2,5\}$ and $B_2 = \{3,4,6\}$ are hyperedges, such that $\chi^{A_1} +\chi^{A_2} = \chi^{B_1}+\chi^{B_2}$, we infer that
$\overline{\mathcal{H}}$ is not $2$-asummable, hence also not threshold.
\end{example}

\subsection{Gluing of hypergraphs}

The decomposition theorem (Theorem~\ref{thm:decomposition}) is based on the following general operation.

\begin{definition}[Gluing of two hypergraphs]
Given a pair of vertex-disjoint hypergraphs
$\mathcal{H}_1 = (V_1,E_1)$ and
$\mathcal{H}_2 = (V_2,E_2)$ and a new vertex $z\not\in V_1\cup V_2$,
the \emph{gluing of $\mathcal{H}_1$ and $\mathcal{H}_2$} is the hypergraph
$\mathcal{H} = \mathcal{H}_1\odot  \mathcal{H}_2$ such that
$$V(\mathcal{H}) = V_1\cup V_2\cup\{z\}$$ and
$$E(\mathcal{H}) = \{\{z\}\cup e\mid e\in E_1\} \cup \{V_1\cup e\mid e\in E_2\}\,.$$
\end{definition}

Let us note the operation of gluing is well-defined also if some of the sets $V_1$, $V_2$, $E_1$, and $E_2$ are empty. The operation can be visualized easily in terms of incidence matrices.
Let $n_i = |V_i|$ and $m_i = |E_i|$ for $i = 1,2$, and let us denote
by $\bbzero^{k,\ell}$, resp.~$\bbone^{k,\ell}$, the $k\times \ell$ matrix of all zeroes, resp.~of all ones.
Then, the incidence matrix of the gluing of $\mathcal{H}_1$ and $\mathcal{H}_2$ can be written as
\[
A^{\mathcal{H}_1\odot \mathcal{H}_2} =\left(
    \begin{array}{ccc}
      \bbone^{m_1,1} & A^{\mathcal{H}_1} & \bbzero^{m_1, n_2} \\
      \bbzero^{m_2,1} & \bbone^{m_2, n_1}  & A^{\mathcal{H}_2} \\
    \end{array}
  \right)\,.
\]
See Fig.~\ref{fig:1} for an example.

\begin{figure}[h!]
  \centering
   \includegraphics[width=150mm]{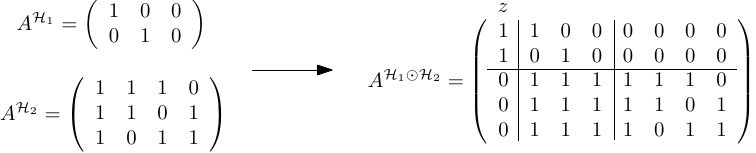}
  \caption{An example of gluing of two hypergraphs.}\label{fig:1}
\end{figure}

To further illustrate the operation of gluing, let us note that the operation generalizes the operations of adding an isolated or a universal vertex.
A vertex $u$ in a hypergraph $\mathcal{H} = (V,E)$ is said to be \emph{universal} (resp., \emph{isolated}) if it is contained in all (resp., in no) hyperedges. The operations of adding an isolated or a universal vertex to a hypergraph are defined in the natural way.

\begin{observation}\label{obs:iso-uni}
For every hypergraph $\mathcal{H}$, the following holds:
\begin{itemize}
  \item The hypergraph obtained by adding an isolated vertex to $\mathcal{H}$ is the result of gluing of $(\emptyset,\emptyset)$ and $\mathcal{H}$.
  \item The hypergraph obtained by adding a universal vertex to $\mathcal{H}$ is the result of gluing of $\mathcal{H}$ and $(\emptyset,\emptyset)$.
\end{itemize}
\end{observation}

The gluing and the complementation operations are related as follows (the proof is left as an easy exercise for the reader):

\begin{observation}\label{obs:complement}
If $\mathcal{H} = \mathcal{H}_1\odot \mathcal{H}_2$, then $\overline{\mathcal{H}} = \overline{\mathcal{H}_2}\odot \overline{\mathcal{H}_1}$
(assuming that in both gluing operations the same new vertex is used).
\end{observation}

It is easy to see that if the gluing of $\mathcal{H}_1$ and $\mathcal{H}_2$ is a $1$-Sperner hypergraph, then both constituent hypergraphs
are $1$-Sperner. We record this fact for later use.

\begin{observation}\label{obs:constituents}
If the gluing of $\mathcal{H}_1$ and $\mathcal{H}_2$ is a $1$-Sperner hypergraph, then $\mathcal{H}_1$ and $\mathcal{H}_2$ are also $1$-Sperner.
\end{observation}

The next proposition establishes a partial converse. Gluing preserves $1$-Spernerness, unless the resulting hypergraph is not Sperner, which happens only in one very special case.

\begin{proposition}\label{prop:gluing}
For every pair
$\mathcal{H}_1 = (V_1,E_1)$ and $\mathcal{H}_2 = (V_2,E_2)$ of
vertex-disjoint \hbox{$1$-Sperner} hypergraphs, their gluing
 $\mathcal{H}_1\odot \mathcal{H}_2$ is a $1$-Sperner hypergraph,
unless
$E_1 = \{V_1\}$ and $E_2 =\{\emptyset\}$
(in which case the
hypergraph  $\mathcal{H}_1\odot  \mathcal{H}_2$ is not Sperner).
\end{proposition}

\begin{proof}
Let $e$ and $f$ be two distinct edges of $\mathcal{H}_1\odot  \mathcal{H}_2$.
If $z\in e\cap f$ then their differences are the same as the corresponding differences of
$e\setminus\{z\}$ and $f\setminus\{z\}$, both of which are hyperedges of $\mathcal{H}_1$.
If $z\not\in e\cup f$ then
their differences are the same as the corresponding differences of
$e\setminus V_1$ and $f\setminus V_1$, both of which are hyperedges of $\mathcal{H}_2$.
If $z\in e\setminus f$, then $e\setminus f = \{z\}$ and $f\setminus e\neq \emptyset$,
unless $e = V_1$, and $f = \emptyset$ (which implies $E_1=\{V_1\}$ and $E_2=\{\emptyset\}$
by our assumption that both $\mathcal{H}_1$ and $\mathcal{H}_2$ are $1$-Sperner).
The case of $z\in f\setminus e$ is symmetric.
\end{proof}

\subsection{Sperner reductions and hypergraph transversals}

Given a hypergraph $\mathcal{H} = (V,E)$, its \emph{Sperner reduction}, $\Sp(\mathcal{H})$, is the hypergraph with vertex set $V$ and with hyperedges the inclusion-minimal elements of $E$.

The following observation is easy to prove from the definitions.

\begin{observation}\label{obs:Sp-red-1}
For any two hypergraphs $\mathcal{H}_1$ and $\mathcal{H}_2$, if
$E(\Sp(\mathcal{H}_1))\subseteq E(\mathcal{H}_2)\subseteq E(\mathcal{H}_1)$, then $E(\Sp(\mathcal{H}_1)) = E(\Sp(\mathcal{H}_2))$.
\end{observation}

The following observation is a direct consequence of the definitions of dually Sperner and $1$-Sperner hypergraphs.

\begin{observation}\label{obs:Sp-red}
The Sperner reduction of every dually Sperner hypergraph is $1$-Sperner.
\end{observation}

Furthermore, the problem of studying the thresholdness property in a class of hypergraphs reduces to the class of their Sperner reductions.

\begin{proposition}\label{lem:Sperner-reduction}
Let $\mathcal{H} = (V,E)$ be a hypergraph, let $w:V\to \mathbb{Z}_{\ge 0}$ and $t\in \mathbb{Z}_{\ge 0}$. Then, $(w,t)$ is a threshold separator of $\mathcal{H}$ if and only if $(w,t)$ is a threshold separator of $\Sp(\mathcal{H})$. In particular, $\mathcal{H}$ is threshold if and only if its Sperner reduction is threshold.
\end{proposition}

\begin{proof}
Let us call a subset of vertices $X\subseteq V$ \emph{heavy} if $w(X)\ge t$, and \emph{light}, otherwise.
The pair $(w,t)$ is a threshold separator of $\mathcal{H}$ if and only if
the heavy subsets of $V$ are precisely those containing a hyperedge of $\mathcal{H}$.
Since the set of heavy subsets depends only on $(w,t)$ and not on $\mathcal{H}$ and
a subset of $V$ contains a hyperedge of $\mathcal{H}$ if and only if
it contains a hyperedge of $\Sp(\mathcal{H})$, the proposition follows.
\end{proof}

Let $\mathcal{H} = (V,E)$ be a hypergraph. A \emph{transversal} of $\mathcal{H}$ is a set of vertices intersecting all hyperedges of $\mathcal{H}$. The \emph{transversal hypergraph} $\mathcal{H}^T$ is the hypergraph with vertex set $V$ in which a set $X\subseteq V$ is a hyperedge if and only if $X$ is an inclusion-minimal transversal of $\mathcal{H}$.
(In particular, if $\mathcal{H}$ has no hyperedge, then its transversal hypergraph is $\mathcal{H}^T = (V(\mathcal{H}), \{\emptyset\})$.)

\begin{observation}[see, e.g., \citet{MR1013569}]\label{obs:transversals}
If $\mathcal{H}$ is a Sperner hypergraph, then $(\mathcal{H}^T)^T = \mathcal{H}$.
\end{observation}

A pair of a mutually transversal Sperner hypergraphs naturally corresponds to a pair of dual monotone Boolean functions, see~\cite{MR2742439}.

\medskip
The next proposition, which will be used in the proof of Theorem~\ref{thm:min-transversal-of-1-Sperner},
describes how to compute the transversal hypergraph of the gluing of two hypergraphs $\mathcal{H}_1$ and $\mathcal{H}_2$ from their transversal hypergraphs.

\begin{proposition}\label{prop:gluing-transversal}
Let $\mathcal{H}$ be a gluing of two vertex-disjoint hypergraphs
$\mathcal{H}_1 = (V_1,E_1)$ and
$\mathcal{H}_2 = (V_2,E_2)$ with
$V(\mathcal{H}) = V_1\cup V_2\cup\{z\}$.
Then,
$$E\left(\mathcal{H}^T\right) = \left\{
                       \begin{array}{ll}
                         \Sp\Big(E\left(\mathcal{H}_1^T\right)\cup \left\{\{z\}\cup e\mid e\in E\left(\mathcal{H}_2^T\right)\right\}\cup \left\{\{z,u\}\mid u\in V_1\right\}\Big), & \hbox{if $E_1\neq \emptyset$;} \\*[3mm]
                        \Sp\Big(E\left(\mathcal{H}_2^T\right) \cup \left\{\{u\}\mid u\in V_1\right\}\Big), & \hbox{if $E_1 = \emptyset$.}
                       \end{array}
                     \right.$$
\end{proposition}

\begin{proof}
Let
$$F = \left\{
                       \begin{array}{ll}
                         E\left(\mathcal{H}_1^T\right)\cup \left\{\{z\}\cup e\mid e\in E\left(\mathcal{H}_2^T\right)\right\}\cup \left\{\{z,u\}\mid u\in V_1\right\}, & \hbox{if $E_1\neq \emptyset$;} \\*[3mm]
                        E\left(\mathcal{H}_2^T\right) \cup \left\{\{u\}\mid u\in V_1\right\}, & \hbox{if $E_1 = \emptyset$.}
                       \end{array}
                     \right.$$
We will first show that every set in $F$ is a transversal of $\mathcal{H}$ and then we will argue that every minimal transversal of $\mathcal{H}$ appears in $F$. Together, by~\Cref{obs:Sp-red-1}, these two claims will imply the stated equality.

The first claim is easy to see by the definition of the gluing operation.

For the second claim, let $X$ be a minimal transversal of $\mathcal{H}$.
Suppose first that $E_1 = \emptyset$. Note that in this case $z$ is an isolated vertex of $\mathcal{H}$, so no minimal transversal of
$\mathcal{H}$ can contain $z$. If $X\cap V_1\neq \emptyset$, then $X = \{u\}$ for some $u\in V_1$ by the minimality property.
If $X\cap V_1 = \emptyset$, then $X$ must be a minimal transversal of $\mathcal{H}$.

Finally, assume that $E_1 \neq \emptyset$.
Suppose also that $V_1= \emptyset$. Then all minimal transversals of $\mathcal{H}$ must contain $z$ and must intersect all hyperedges of $\mathcal{H}_2$. Thus, $X$ must have the form $X = \{z\}cup e$ for some $e\in E\left(\mathcal{H}_2^T\right)$; in particular $X\in F$.
Now let $V_1 \neq \emptyset$.
If $z\not\in X$, then $X$ must be a minimal transversal of $\mathcal{H}_1$.
If $z\in X$ and $X\cap V_1\neq\emptyset$, then by minimality we must have
$X = \{z,u\}$ for some $u\in V_1$.
If $z\in X$ and $X\cap V_1= \emptyset$, then we  must have
$X = \{z\}\cup e$ for some $e\in E\left(\mathcal{H}_2^T\right)$.
In either case, $X$ belongs to $F$. This completes the proof.
\end{proof}

\subsection{Ungluing hypergraphs}

We next introduce some terminology related to hypergraphs that are the result of a gluing operation. Given a vertex $z$ of a hypergraph $\mathcal{H}$, we say that a hypergraph $\mathcal{H}$ is \emph{$z$-decomposable} if for every two hyperedges $e,f\in E(\mathcal{H})$ such that $z\in e\setminus f$, we have $e\setminus\{z\}\subseteq f$.
Equivalently, if the vertex set of $\mathcal{H}$ can be partitioned as $V(\mathcal{H}) = \{z\}\cup V_1\cup V_2$ such that
$\mathcal{H} = \mathcal{H}_1\odot  \mathcal{H}_2$ for some hypergraphs $\mathcal{H}_1 = (V_1,E_1)$ and $\mathcal{H}_2 = (V_2,E_2)$.
We call
$\mathcal{H} = \mathcal{H}_1\odot  \mathcal{H}_2$ a  \emph{$z$-decomposition} of  $\mathcal{H}$.

The following proposition gathers some basic properties of decomposability.

\begin{proposition}\label{prop:complement-2}
Let $\mathcal{H}$ be a hypergraph. Then, the following holds:
\begin{enumerate}[(i)]
\item If $z$ is a vertex of $\mathcal{H}$ such that $\mathcal{H}$ is $z$-decomposable,
then $\overline{\mathcal{H}}$ is also $z$-decomposable.
  \item If $z$ is an isolated or a universal vertex of $\mathcal{H}$, then
  $\mathcal{H}$ is $z$-decomposable.
\end{enumerate}
\end{proposition}

\begin{proof}
Statement $(i)$ follows from Observation~\ref{obs:complement}.

Statement $(ii)$ is related to Observation~\ref{obs:iso-uni}. Note that $z$ is universal in $\mathcal{H}$ if and only if it is isolated in $\overline{\mathcal{H}}$. By $(i)$, it therefore suffices to prove the statement for the case when $z$ is an isolated vertex of $\mathcal{H}$.
In this case, the column of $A^\mathcal{H}$ indexed by $z$ is the all zero vector.
It follows that $\mathcal{H}$ is $z$-decomposable, as follows:
$V(\mathcal{H}) = \{z\}\cup V_1\cup V_2$ with $\mathcal{H} = \mathcal{H}_1\odot  \mathcal{H}_2$,
$\mathcal{H}_1 = (V_1,E_1)$ and $\mathcal{H}_2 = (V_2,E_2)$, where $V_1 = E_1 = \emptyset$, $V_2 = V\setminus\{z\}$, and $E_2 = E(\mathcal{H})$.
\end{proof}

Recall that by~\Cref{obs:constituents}, if a $1$-Sperner hypergraph $\mathcal{H}$ has a $z$-decomposition $\mathcal{H} = \mathcal{H}_1\odot  \mathcal{H}_2$, then $\mathcal{H}_1$ and $\mathcal{H}_2$ are also $1$-Sperner.

\medskip
Whether a given hypergraph is $z$-decomposable for some vertex $z$ can be checked in a straightforward way in polynomial time.

\begin{proposition}\label{prop:complexity}
Let $\mathcal{H} = (V,E)$ be a hypergraph with $V\neq \emptyset$ and $E\neq \emptyset$ given by the lists of its vertices and hyperedges.
We can recognize if $\mathcal{H}$ is $z$-decomposable for some $z\in V$ and find a corresponding $z$-decomposition
$\mathcal{H} = \mathcal{H}_1\odot  \mathcal{H}_2$ (if there is one) in time $\mathcal{O}(|V|^2|E|)$.
\end{proposition}

\begin{proof}
It suffices to show that for a given vertex $z\in V$ we can verify in time
$\mathcal{O}(|V||E|)$  if $\mathcal{H}$ is $z$-decomposable and find a corresponding $z$-decomposition $\mathcal{H} = \mathcal{H}_1\odot  \mathcal{H}_2$ (if there is one).

First, we partition the hyperedges of $\mathcal{H}$ into those containing $z$ and those not containing $z$.
Secondly, we compute the sets $E_1 = \{e\setminus \{z\}\mid z\in e\in E\}$
and $V_1 = \cup\{e\mid e\in E_1\}$.
Thirdly, we verify if for every hyperedge $e\in E$ not containing $z$, we have $V_1\subseteq e$. If this condition is not satisfied, then $\mathcal{H}$ is not $z$-decomposable.
If the condition is satisfied, then we compute the sets
$V_2 = V\setminus (V_1\cup\{z\})$ and
$E_2 = \{e\setminus V_1\mid z\not\in e\in E\}$.
We return the $z$-decomposition $\mathcal{H} = \mathcal{H}_1\odot  \mathcal{H}_2$, where
$\mathcal{H}_1 = (V_1, E_1)$ and
$\mathcal{H}_2 = (V_2, E_2)$.
Since each of the steps can be performed in time $\mathcal{O}(\sum_{e\in E}|e|) = \mathcal{O}(|V||E|)$, the claimed time complexity follows.
\end{proof}

Clearly, if $\mathcal{H} = (V,E)$ is a hypergraph with at least one vertex and no hyperedges, then $\mathcal{H}$ is $z$-decomposable for every $z\in V$ and a $z$-decomposition of $\mathcal{H}$ can be computed in time $\mathcal{O}(|V|)$.

\section{Uniform $1$-Sperner hypergraphs}
\label{sec:uniform}

In the next lemma we give a necessary condition for a uniform hypergraph to be $1$-Sperner. The condition will be used in the proof of Theorem~\ref{thm:decomposition}.

\begin{lemma}\label{lem2}
Let $\mathcal{H}$ be a $k$-uniform $1$-Sperner hypergraph, where $k\ge 1$.
Then, either there is a subset $P$ of vertices of size $k-1$ such that $P\subseteq e$ for all $e\in E(\mathcal{H})$
or there is a subset $Q$ of vertices of size $k+1$ such that $e\subseteq Q$ for all $e\in E(\mathcal{H})$.
\end{lemma}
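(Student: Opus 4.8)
The plan is to induct on $|E(\mathcal H)|$, using Lemma~\ref{lem1} to control how any two hyperedges interact with a fixed one. If $\mathcal H$ has at most one hyperedge the claim is trivial (take $Q$ to be any hyperedge $e$ together with one extra vertex, or $P$ to be $e$ minus one vertex), so assume $|E(\mathcal H)|\ge 2$. Fix any hyperedge $C\in E(\mathcal H)$; since $\mathcal H$ is $r$-uniform, $1$-Sperner forces every other hyperedge $e$ to satisfy $|e\setminus C|=|C\setminus e|=1$, i.e. every hyperedge is obtained from $C$ by removing one vertex and adding one vertex. Thus each hyperedge $e\ne C$ is described by a pair $(a_e,b_e)$ with $a_e\in C$, $b_e\notin C$, and $e=(C\setminus\{a_e\})\cup\{b_e\}$.

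The key combinatorial step is to show that the map $e\mapsto b_e$ (the ``new'' vertex) is constant over all hyperedges $e\ne C$, or else the map $e\mapsto a_e$ (the ``deleted'' vertex) is constant. Suppose not: then there exist hyperedges $e,f\ne C$ with $b_e\ne b_f$ and $a_e\ne a_f$. Apply Lemma~\ref{lem1} with the maximum-size hyperedge $C$, the vertices $x=b_e\notin C$ and $y=b_f\notin C$, and the hyperedges $A=e$ (containing $x$), $B=f$ (containing $y$); since $|e|=|f|=r$ we get both $e\cap C\subseteq f\cap C$ and, by symmetry, $f\cap C\subseteq e\cap C$, hence $e\cap C=f\cap C$, i.e. $C\setminus\{a_e\}=C\setminus\{a_f\}$, contradicting $a_e\ne a_f$. (Here I should double-check the degenerate case $b_e=b_f$ but $a_e\ne a_f$, and the case $a_e=a_f$ but $b_e\ne b_f$; in the first, all hyperedges other than $C$ that share this new vertex already lie in a common $(r+1)$-set $C\cup\{b_e\}$, and in the second they all contain the common $(r-1)$-set $C\setminus\{a_e\}$ — so the conclusion still follows after folding $C$ back in.) The upshot is a dichotomy: either all hyperedges $e\ne C$ share a common new vertex $b$, or they share a common deleted vertex $a$.

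In the first case, every hyperedge $e\ne C$ satisfies $e\subseteq C\cup\{b\}=:Q$, a set of size $r+1$, and also $C\subseteq Q$, so $Q$ works. In the second case, every hyperedge $e\ne C$ satisfies $P:=C\setminus\{a\}\subseteq e$, and we would like $P\subseteq C$ too — but that is immediate since $P\subseteq C$ by construction; so $P$, of size $r-1$, works. Thus in either case the conclusion holds directly, \emph{without even needing induction} once the dichotomy is established; alternatively one can phrase it inductively by deleting $C$, applying the inductive hypothesis to $\mathcal H-C$, and checking that the resulting $P$ or $Q$ can be chosen compatibly with $C$.

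The main obstacle is the bookkeeping around degenerate cases in the dichotomy argument: when two hyperedges happen to agree in their new vertex but differ in their deleted vertex (or vice versa), Lemma~\ref{lem1} does not directly apply with distinct $x,y$, so one must argue separately that such hyperedges are automatically confined to a common $(r+1)$-set (resp. contain a common $(r-1)$-set), and then verify that these partial conclusions glue together globally — i.e. that one cannot have some hyperedges forcing the ``$P$'' outcome and others forcing the ``$Q$'' outcome. I expect this to be resolvable by a short case analysis: if there are three hyperedges $C,e,f$ with $e,f$ pairwise ``incompatible'' in the required sense, a direct application of the $1$-Sperner condition to the pair $\{e,f\}$ (which also uniformly has symmetric differences of size $1$) yields the contradiction.
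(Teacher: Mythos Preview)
Your approach is correct but different from the paper's. The paper does not invoke Lemma~\ref{lem1} at all: it fixes two hyperedges $e,f$, sets $P=e\cap f$ (of size $r-1$), and argues directly from the $1$-Sperner condition. If some hyperedge $g$ misses a vertex $u\in P$, then applying $1$-Sperner to $\{g,e\}$ and $\{g,f\}$ forces $e\setminus\{u\}\subseteq g$ and $f\setminus\{u\}\subseteq g$, so $g=(e\cup f)\setminus\{u\}\subseteq Q:=e\cup f$; a short count then shows $e,f$ are the only hyperedges containing $P$, whence every hyperedge lies in $Q$. Your route---fixing one hyperedge $C$, parametrising every other hyperedge by the swapped pair $(a_e,b_e)$, and using Lemma~\ref{lem1} to force $e\cap C=f\cap C$ whenever $b_e\ne b_f$---gives a cleaner structural dichotomy, at the cost of importing an auxiliary lemma; the paper's argument is shorter and self-contained.

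Your ``main obstacle'' is not an obstacle, and the induction is unnecessary. What Lemma~\ref{lem1} gives you is the pairwise implication $b_e\ne b_f\Rightarrow a_e=a_f$, and this already yields the global dichotomy in two lines: if not all $a_e$ coincide, pick $e,f$ with $a_e\ne a_f$; then $b_e=b_f=:b$, and any $g$ with $b_g\ne b$ would satisfy $a_g=a_e$ (from the pair $e,g$) and $a_g=a_f$ (from the pair $f,g$), contradicting $a_e\ne a_f$; hence all $b_g=b$. No separate treatment of the ``degenerate'' cases is needed, and you can drop the hedging in the final paragraph.
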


\begin{proof}
The statement of the lemma holds if
$\mathcal{H}$ has at most one hyperedge. So let us assume that $\mathcal{H}$ has at least two hyperedges, say $e$ and $f$.
Let $P = e \cap f$. Since $\mathcal{H}$ is $1$-Sperner, $|P| = k-1$.
If all hyperedges of $\mathcal{H}$ contain $P$, then we are done.

If there is a hyperedge $g$ such that $P\nsubseteq g$, say $u\in P\setminus g$, then $e$ and $f$ are the only hyperedges
containing $P$, since otherwise
$g$ should contain all  vertices of such hyperedges other than $u$, which
would imply $|g|>k$.
Consequently, all hyperedges that miss a vertex of $P$ are subsets of $Q = e\cup f$, and
the lemma is proved.
\end{proof}

Lemma~\ref{lem2} suggests the following two families of uniform $1$-Sperner hypergraphs.

\begin{example}\label{ex:stars}
Given $k\ge 1$, an {\em  $k$-star} is a $k$-uniform hypergraph $\mathcal{H} = (V,E)$
such that  there exists sets $X,Y\subseteq V$ such that
\begin{itemize}
  \item $X\cup Y\subseteq V$ where $|X| = k-1$, $Y\neq\emptyset$, and $X\cap Y = \emptyset$, and
  \item $E = \{X\cup\{y\}\mid y\in Y\}$.
\end{itemize}
If this is the case, we say that $\mathcal{H}$ is the {\em ($k$-)star generated by $(V,X,Y)$}.

\begin{sloppypar}
Clearly, every $k$-star is $1$-Sperner.
Moreover, let us verify that every $k$-star is $z$-decomposable with respect to every vertex $z$.
Let $\mathcal{H}$ be a $k$-star generated by $(V,X,Y)$ and let $z\in V(\mathcal{H})$.
If $z\in X$, then $k\ge 2$ and we have  $\mathcal{H} = \mathcal{H}_1\odot \mathcal{H}_2$ where
$\mathcal{H}_1$ is the $(k-1)$-star generated by $X\setminus \{z\}$ and $Y$ and
$V(\mathcal{H}_2) = E(\mathcal{H}_2) = \emptyset$.
If $z\in Y$, then we have $\mathcal{H} = \mathcal{H}_1\odot \mathcal{H}_2$ where
$\mathcal{H}_1 = (X,\{X\})$ and
$\mathcal{H}_2 = (Y\setminus\{z\},\{\{y\}\mid y\in Y\setminus\{z\}\})$.
Finally, if $z\in V\setminus(X\cup Y)$, then $z$ is isolated and
$\mathcal{H}$ is $z$-decomposable by Proposition~\ref{prop:complement-2}.
\end{sloppypar}
\end{example}

\begin{example}\label{ex:co-stars}
Given $k\ge 1$, an {\em  $k$-antistar} is a $k$-uniform hypergraph $\mathcal{H} = (V,E)$
such that  there exists sets $X,Y\subseteq V$ such that
\begin{itemize}
  \item $X\cup Y\subseteq V$ where $Y\neq\emptyset$ and $X\cap Y = \emptyset$, and
  \item $E = \{X\cup (Y\setminus\{y\})\mid y\in Y\}$.
\end{itemize}
If this is the case, we say that $\mathcal{H}$ is the {\em ($k$-)antistar generated by $(V,X,Y)$}. Note that every $k$-antistar is the complement of a $k$-star. It follows, using
Propositions~\ref{prop:complement} and~\ref{prop:complement-2} and the properties
of stars observed in Example~\ref{ex:stars}, that
every antistar is $1$-Sperner and $z$-decomposable with respect to each vertex $z$.
\end{example}

\section{Decomposition theorem}\label{sec:decomposition}

To prove the main structural result about $1$-Sperner hypergraph (Theorem~\ref{thm:decomposition}), we need the following technical lemma.

\begin{lemma}\label{lem1}
Let $\mathcal{H}$ be a $1$-Sperner hypergraph with $E(\mathcal{H})\neq\emptyset$ and let $C$ be a hyperedge of $\mathcal{H}$ of maximum size. Then, for every two distinct vertices $x, y\not\in C$ and every two hyperedges $A$ containing $x$ and $B$ containing $y$, $|A|\le |B|$ implies $A\cap C\subseteq B\cap C$.
\end{lemma}

\begin{proof}
Note that $|A|\le |C|$, therefore
$A\setminus C = \{x\}$, since $\mathcal{H}$ is $1$-Sperner. Analogously,
$B\setminus C = \{y\}$. Thus, if the sets $A\cap C$ and $B\cap C$ were not comparable with respect to inclusion, the pair $\{A,B\}$ would violate the $1$-Sperner property of $\mathcal{H}$.
\end{proof}

\begin{sloppypar}
\begin{theorem}\label{thm:decomposition}
Every $1$-Sperner hypergraph $\mathcal{H} = (V,E)$ with $V\neq \emptyset$
is $z$-decomposable for some $z\in V(\mathcal{H})$, that is, it is the gluing of two $1$-Sperner hypergraphs.
\end{theorem}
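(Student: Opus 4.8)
The plan is to produce a single vertex $z$ for which $\mathcal{H}$ is $z$-decomposable; the two factors of the resulting gluing will then automatically be $1$-Sperner. If $|\mathcal{E}|\le 1$, the defining condition of $z$-decomposability is vacuously satisfied for every $z\in V$, so $\mathcal{H}$ is $1$-decomposable; hence I may assume $|\mathcal{E}|\ge 2$. I would fix a hyperedge $C$ of maximum size. Then $C\neq V$, since otherwise every other hyperedge would be a proper subset of $C$, contradicting the Sperner property; so $V\setminus C\neq\emptyset$. As in the proof of Lemma~\ref{lem1}, every hyperedge $e\neq C$ satisfies $|e\setminus C| = 1$: the difference is nonempty since $e\not\subseteq C$ by Spernerness, and it cannot have two or more elements, as $\min\{|e\setminus C|,|C\setminus e|\}=1$ would then force $|C\setminus e|=1$ and hence $|e|>|C|$. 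Write $e\setminus C = \{z_e\}$ with $z_e\in V\setminus C$, so that $e = (e\cap C)\cup\{z_e\}$ with $z_e\notin e\cap C$.

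The crucial choice is this: let $e_1$ be a hyperedge of \emph{minimum} size among all hyperedges distinct from $C$, and set $z := z_{e_1}$. I would then verify directly that $\mathcal{H}$ is $z$-decomposable. Let $e,f$ be hyperedges with $z\in e\setminus f$; the goal is $e\setminus\{z\}\subseteq f$. Since $z\notin C$ we have $e\neq C$, so $z_e = z$ and $e\setminus\{z\} = e\cap C$. If $f = C$ then $e\cap C\subseteq C = f$ and we are done, so suppose $f\neq C$; then $z_f$ is defined and $z_f\neq z$, because $z\notin f$. As $e\cap C\subseteq C$, it suffices to prove $e\cap C\subseteq f\cap C$.

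The heart of the argument is the claim that $|e|\le|f|$ must hold. Suppose not, so $|f|<|e|$. Then Lemma~\ref{lem1}, applied to the distinct vertices $z_f,z_e\notin C$ and the hyperedges $f\ni z_f$, $e\ni z_e$ with $|f|<|e|$, gives $f\cap C\subseteq e\cap C$. Also, Lemma~\ref{lem1} applied to the distinct vertices $z_{e_1}=z$ and $z_f$ and the hyperedges $e_1\ni z$, $f\ni z_f$ with $|e_1|\le|f|$ (minimality of $e_1$) gives $e_1\cap C\subseteq f\cap C$. Chaining these, $e_1\cap C\subseteq f\cap C\subseteq e\cap C$; since $z_{e_1}=z=z_e$, this yields $e_1 = (e_1\cap C)\cup\{z\}\subseteq(e\cap C)\cup\{z\} = e$, with $e_1\neq e$ because $|e_1|\le|f|<|e|$, contradicting the Sperner property. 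Hence $|e|\le|f|$, and one final application of Lemma~\ref{lem1} to $z_e,z_f$ and $e\ni z_e$, $f\ni z_f$ with $|e|\le|f|$ gives $e\cap C\subseteq f\cap C$, as required. Thus $\mathcal{H}$ is $z$-decomposable, i.e.\ $\mathcal{H} = \mathcal{H}_1\odot\mathcal{H}_2$ for some $\mathcal{H}_1 = (V_1,\mathcal{E}_1)$ and $\mathcal{H}_2 = (V_2,\mathcal{E}_2)$.

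Finally I would check that $\mathcal{H}_1$ and $\mathcal{H}_2$ are themselves $1$-Sperner. By the definition of gluing, the hyperedges of $\mathcal{H}$ containing $z$ are exactly the sets $\{z\}\cup e'$ with $e'\in\mathcal{E}_1$, and those not containing $z$ are exactly the sets $V_1\cup e'$ with $e'\in\mathcal{E}_2$. Two distinct hyperedges $e',f'\in\mathcal{E}_1$ then correspond to two distinct hyperedges $\{z\}\cup e'$, $\{z\}\cup f'$ of $\mathcal{H}$ whose pairwise set differences coincide with those of $e'$ and $f'$, so $\min\{|e'\setminus f'|,|f'\setminus e'|\}=1$; the same argument applies to $\mathcal{E}_2$ using $V_1\cup e'$ and $V_1\cup f'$. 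I expect the only genuinely delicate point of the whole proof to be the choice of the decomposition vertex: taking $z = z_{e_1}$ for a minimum-size hyperedge $e_1\neq C$ is precisely what rules out the troublesome configuration $|f|<|e|$ and lets Lemma~\ref{lem1} be applied in the direction we need; everything else is routine manipulation of the gluing operation.
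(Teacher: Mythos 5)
Your proof is correct, and it takes a genuinely different route from the paper's. The paper's argument first discards isolated vertices, introduces the quantities $k(v)=\max_{v\in e\in{\cal E}}|e|$, and splits into two cases according to whether all the $k(v)$ coincide; the first case is handled via Lemma~\ref{lem1}, while the second needs Lemma~\ref{lem2} on uniform $1$-Sperner hypergraphs together with the star/antistar structure of Examples~\ref{ex:stars} and~\ref{ex:co-stars}. You instead fix a maximum-size hyperedge $C$, note that $e\setminus C=\{z_e\}$ for every hyperedge $e\neq C$, and decompose at $z=z_{e_1}$ for a minimum-size hyperedge $e_1\neq C$; the entire verification is three applications of Lemma~\ref{lem1} plus the Sperner property. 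I checked the delicate step: when $z\in e\setminus f$, $f\neq C$ and $|f|<|e|$, the hypotheses of Lemma~\ref{lem1} are indeed met in both applications (the vertices $z$, $z_f$, $z_e$ lie outside $C$ and are distinct since $z_e=z\notin f\ni z_f$, and $|e_1|\le|f|$ follows from minimality because $f\neq C$), and the chain $e_1\cap C\subseteq f\cap C\subseteq e\cap C$ together with $z_{e_1}=z_e=z$ forces $e_1\subsetneq e$, contradicting Spernerness; the remaining pieces (the degenerate case $|{\cal E}|\le 1$, the case $f=C$, and the factors of the gluing inheriting the $1$-Sperner property because set differences are preserved) are all sound. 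What your argument buys is brevity and an explicit decomposition vertex, with no case distinction and no need for Lemma~\ref{lem2}; what the paper's longer route buys is the structural description of the uniform situation (stars and antistars, decomposable at every vertex), which it develops along the way.
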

\end{sloppypar}

\begin{proof}
By Proposition~\ref{prop:complement-2}, we may assume that $\mathcal{H}$ does not have any isolated vertices.
For every $v\in V$, let $$k(v) =
\max_{v\in e\in E}|e|$$
and let $k = \max_{e\in E}|e|$.

We consider two cases.

\smallskip

\noindent\emph{Case 1: Not all the $k(v)$ values are the same.}
Let $v\in V$ be a vertex with the smallest $k(v)$ value.
Then $k(v)<k$ by the assumption of this case.

Suppose first that for every hyperedge $f\in E$ such that
$v\not\in f$, we have $|f|\ge k(v)$. We claim that in this case $\mathcal{H}$ is $v$-decomposable.
This is because for every two hyperedges $e,f\in E$ such that $v\in e\setminus f$, we have
$|f|\ge k(v)\ge |e|$, implying $|f\setminus e|\ge |e\setminus f|$, from what we derive, using the fact that
$\mathcal{H}$ is $1$-Sperner, that $|e\setminus f| = 1$, that is, $e\setminus\{v\}\subseteq f$.
This proves the claim.

Assume next that there exists a hyperedge $f\in E$ such that $v\not\in f$ and $|f|<k(v)$.
Let $e$ be a hyperedge containing $v$ of size $k(v)$, and let $g$ be a hyperedge of maximum size,
that is, $|g| = k$. Then $v\not\in g$, since $k(v)<|g|$.
Since $\mathcal{H}$ is Sperner, there exists a vertex $u\in f\setminus g$.
Note that $|f|\le |g|$, therefore
$f\setminus g = \{u\}$, since $\mathcal{H}$ is $1$-Sperner.
Moreover, $u\neq v$ since $u\in f$ and $f$ does not contain~$v$.

We know that $k(u)\ge k(v)$, by our choice of $v$. Therefore, there exists a hyperedge $h$ containing $u$ and of size
$k(u)$. Since $|h| = k(u)\ge k(v)>|f|$, we have $h\neq f$.
Applying Lemma~\ref{lem1} with $(x,y,A,B,C) = (u,v,f,e,g)$ implies
$f\cap g\subseteq e\cap g$.
Applying Lemma~\ref{lem1} with $(x,y,A,B,C) = (v,u,e,h,g)$ implies
$e\cap g\subseteq h\cap g$.
Consequently, $f\cap g\subseteq h\cap g$.
On the other hand, $f\setminus g = h\setminus g = \{u\}$.
It follows that $f\subseteq h$, contradicting the Sperner property of $\mathcal{H}$.
This completes Case~1.

\medskip
\noindent\emph{Case 2: All the $k(v)$ values are the same.}
Let $v\in V(\mathcal{H})$ and let $k = k(v)$. If $k\le 1$, then $\mathcal{H}$ is $z$-decomposable with respect to every vertex $z$.
So suppose that $k\ge 2$. Consider the subhypergraph $\mathcal{H}'$ of $\mathcal{H}$ with $V({\mathcal{H}'}) = V(\mathcal{H})$
formed by the hyperedges of $\mathcal{H}$ of size $k$.
By Lemma~\ref{lem2} applied to $\mathcal{H}'$, either there is a subset $P$ of vertices of size $k-1$ such that $P\subseteq e$ for all
 $e\in E(\mathcal{H}')$ or there is a subset $Q$ of vertices of size $k+1$ such that $e\subseteq Q$ for all $e\in E(\mathcal{H}')$.

Suppose first that there is a subset $P$ of vertices of size $k-1$ such that $P\subseteq e$ for all $e\in E(\mathcal{H}')$.
If $\mathcal{H}' = \mathcal{H}$, that is, all hyperedges of $\mathcal{H}$ are of size $k$, then $\mathcal{H}$ is $z$-decomposable
with respect to every vertex $z$ (cf.~Example~\ref{ex:stars}).
So we may assume that $\mathcal{H}' \neq \mathcal{H}$, that is, that $\mathcal{H}$ contains a hyperedge $g$ of size less than $k$.
By the assumption of Case~$2$, we know that $g\subseteq \cup_{f\in E(\mathcal{H}')}f$.
Since $\mathcal{H}$ is Sperner, $g$ is not contained in any of the hyperedges of $\mathcal{H}'$; moreover
$g$ contains at least two vertices from the set
$Y = \left(\cup_{f\in E(\mathcal{H}')}f\right)\setminus P$.
If $g$ contains at least three vertices from $Y$, say $y_1, y_2, y_3$, then the hyperedges
$P\cup \{y_1\}$ and $g$ would violate the $1$-Sperner property, since
$\{y_2,y_3\}\subseteq g\setminus (P\cup \{y_1\})$ and
$P\setminus g\subseteq (P\cup \{y_1\})\setminus g$  (note that $|P\setminus g|\ge 3$).
It follows that $|g\cap Y| = 2$.
In fact, we have $|Y| = 2$, say $Y = \{y_1,y_2\}$, since otherwise, using similar arguments as above, we see that
the sets $P\cup \{y\}$ and $g$ would violate the $1$-Sperner property, where $y\in Y\setminus g$.
It follows that $\mathcal{H}'$ has exactly $2$ hyperedges, and $Y\subseteq e$ for every set
$e\in E(\mathcal{H})\setminus E(\mathcal{H}')$.
Consequently, $\mathcal{H}$ is $y$-decomposable for every $y\in Y$:
Decomposing $\mathcal{H}$ with respect to
$y = y_1$, for instance, we have $\mathcal{H} = \mathcal{H}_1\odot \mathcal{H}_2$ where
$\mathcal{H}_1 = (V_1,E_1)$ with $V_1 = V\setminus\{y_1\}$,
$E_1 = \{e\setminus\{y_1\}\mid y_1\in e\in E(\mathcal{H})\}$, and
$\mathcal{H}_2 = (\emptyset, \{\emptyset\})$.

It remains to consider the case when there is a subset $Q$ of vertices of size $k+1$ such that $e\subseteq Q$ for all $e\in E(\mathcal{H}')$.
Since we assume that $k(v) = k$ for all vertices $v$, we have $V = Q$.
Let us define
$X = \cap_{f\in E(\mathcal{H}')}f$ and
$Y = V\setminus X$.
Then, $Y\neq\emptyset$, and every hyperedge $g\in E(\mathcal{H})\setminus E(\mathcal{H}')$ must contain $Y$,
since $\mathcal{H}$ is Sperner and for every vertex $y\in Y$, the set $V\setminus \{y\}$ is a hyperedge of $\mathcal{H}$.
Consequently, $\mathcal{H}$ is $y$-decomposable for every $y\in Y$:
taking any $y\in Y$, we have $\mathcal{H} = \mathcal{H}_1\odot \mathcal{H}_2$ where
$\mathcal{H}_1 = (V_1,E_1)$ with $V_1 = V\setminus\{y\}$,
$E_1 = \{e\setminus\{y\}\mid y\in e\in E(\mathcal{H})\}$, and
$\mathcal{H}_2 = (\emptyset, \{\emptyset\})$.
\end{proof}

Let us say that a gluing of two vertex-disjoint $1$-Sperner hypergraphs $\mathcal{H}_1 = (V_1,E_1)$, $\mathcal{H}_2 = (V_2,E_2)$
is \emph{safe} if it results in a $1$-Sperner hypergraph. By Proposition~\ref{prop:gluing}, this is
always the case unless $E_1 = \{V_1\}$ and $E_2 = \{\emptyset\}$.
Thus, Theorem~\ref{thm:decomposition} and Proposition~\ref{prop:gluing} imply the following composition result
for the class of \hbox{$1$-Sperner} hypergraphs.

\begin{sloppypar}
\begin{theorem}\label{cor:composition}
A hypergraph $\mathcal{H}$ is $1$-Sperner if and only if it either has no vertices
(that is, $\mathcal{H}\in \{(\emptyset, \emptyset),(\emptyset, \{\emptyset\})\}$)
or it is a safe gluing of two smaller $1$-Sperner hypergraphs.
\end{theorem}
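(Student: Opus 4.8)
The plan is to show that Theorem~\ref{cor:composition} is essentially just a repackaging of Theorem~\ref{thm:decomposition} and Proposition~\ref{prop:gluing}, so the proof should be short.

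First I would prove the forward implication. Suppose ${\cal H}$ is $1$-Sperner. If $V({\cal H}) = \emptyset$, then the only subsets of $V$ available as hyperedges are $\emptyset$ itself, and since ${\cal H}$ is Sperner there is at most one hyperedge; hence ${\cal H}$ is one of $(\emptyset,\emptyset)$ or $(\emptyset,\{\emptyset\})$, as claimed. If $V({\cal H}) \neq \emptyset$, then by Theorem~\ref{thm:decomposition} the hypergraph ${\cal H}$ is $1$-decomposable, i.e.\ ${\cal H} = {\cal H}_1 \odot {\cal H}_2$ for some vertex-disjoint $1$-Sperner hypergraphs ${\cal H}_1 = (V_1,{\cal E}_1)$ and ${\cal H}_2 = (V_2,{\cal E}_2)$. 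Since the new vertex $z$ is removed in passing from ${\cal H}$ to either factor, and $|V_1|+|V_2| = |V({\cal H})|-1 < |V({\cal H})|$, both factors are strictly smaller than ${\cal H}$ (measuring, say, by number of vertices). Finally, this gluing is safe: ${\cal H}$ being $1$-Sperner means in particular it is Sperner, so by the exceptional case of Proposition~\ref{prop:gluing} we cannot have simultaneously ${\cal E}_1 = \{V_1\}$ and ${\cal E}_2 = \{\emptyset\}$; hence the gluing is safe by definition.

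For the backward implication, if ${\cal H}$ has no vertices then it is one of the two listed hypergraphs, each of which has at most one hyperedge and is therefore $1$-Sperner by the remark in the introduction. Otherwise ${\cal H}$ is a safe gluing of two $1$-Sperner hypergraphs, which is $1$-Sperner by Proposition~\ref{prop:gluing} (together with the definition of ``safe'', which is exactly the condition excluding the non-Sperner exceptional case).

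I do not expect a genuine obstacle here, since both directions are immediate from results already established; the only point requiring a word of care is the notion of ``smaller'' in the statement—one must fix a size measure (number of vertices works, since a gluing adds exactly one new vertex $z$) and observe that both factors are proper with respect to it, so that iterating the decomposition terminates. I would state this explicitly to make clear that Theorem~\ref{cor:composition} yields a genuine recursive construction of all $1$-Sperner hypergraphs.
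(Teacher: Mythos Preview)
Your proof is correct and matches the paper's approach exactly: the paper states that Theorem~\ref{cor:composition} follows immediately from Theorem~\ref{thm:decomposition} and Proposition~\ref{prop:gluing}, and you have simply spelled out both directions of that deduction. Your added remark about fixing ``number of vertices'' as the size measure is a worthwhile clarification that the paper leaves implicit.
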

\end{sloppypar}

\section{Applications of the decomposition theorem}\label{sec:applications}

In this section we present several applications of Theorems~\ref{thm:decomposition} and~\ref{cor:composition} giving further insight on the properties of $1$-Sperner hypergraphs.

\subsection{$1$-Sperner hypergraphs are threshold}
\label{sec:1-Sperner-threshold}

Our first application is motivated by the result of~\citet{MR3281177} stating that every dually Sperner hypergraph is threshold, see Theorem~\ref{thm:dually-Sperner}. The proof of Theorem~\ref{thm:dually-Sperner} given in~\cite{MR3281177}
is based on the characterization of thresholdness in terms of asummability (see Theorem~\ref{thm:ChowElgot}) and does not show how to compute a threshold separator of a dually Sperner hypergraph.
Here we give an alternative proof of Theorem~\ref{thm:dually-Sperner}, based on the composition theorem of $1$-Sperner hypergraphs. In contrast with the proof from~\cite{MR3281177}, this proof is constructive in the sense that it computes an explicit threshold separator of a $1$-Sperner or, more generally, dually Sperner hypergraph.

Clearly, every $1$-Sperner hypergraph is dually Sperner. Therefore, Theorem~\ref{thm:dually-Sperner} implies that every
$1$-Sperner hypergraph is threshold. We will now derive this fact
directly from the composition theorem. In fact, we show a bit more, namely that every $1$-Sperner hypergraph $\mathcal{H} = (V,E)$ admits a \emph{positive} threshold separator, that is, a threshold separator $(w,t)$ such that $w:V\to \mathbb{Z}_{>0}$ is a (strictly) positive integer weight function.

The following simple technical claim will be used in the proof.

\begin{lemma}\label{lem:technical}
For every threshold separator $(w,t)$ of a Sperner threshold hypergraph $\mathcal{H} = (V,E)$, we have:
\begin{enumerate}[(i)]
  \item if $w(V)= t$ then $E = \{V\}$, and
  \item if $t=0$ then $E = \{\emptyset\}$.
\end{enumerate}
\end{lemma}

\begin{proof}
If $w(V)= t$, then $V$ is a hyperedge and if
$t=0$, then the empty set is a hyperedge. (Both of these claims follow from the fact that $(w,t)$ is a threshold separator of $\mathcal{H}$.) In both cases no other hyperedge may exist due to the Sperner property.
\end{proof}

\begin{theorem}\label{thm:1-Sperner-threshold}
Every $1$-Sperner hypergraph is threshold with a positive threshold separator.
\end{theorem}

\begin{proof}
Let $\mathcal{H} = (V,E)$ be a $1$-Sperner hypergraph.
The proof is by induction on $n = |V|$.
For $n = 0$, we can obtain a positive threshold separator by taking the (empty) mapping given by $w(x) = 1$ for all $x\in V$
and the threshold $$t = \left\{
                         \begin{array}{ll}
                           1, & \hbox{if $E = \emptyset$;} \\
                           0, & \hbox{if $E = \{\emptyset\}$.} \\
                         \end{array}
                       \right.
$$

Now, let $n\ge 1$. By Theorem~\ref{cor:composition},  $\mathcal{H}$ is the safe gluing of two $1$-Sperner hypergraphs, say
$\mathcal{H} = \mathcal{H}_1\odot  \mathcal{H}_2$ with
$\mathcal{H}_1 = (V_1,E_1)$ and
$\mathcal{H}_2 = (V_2,E_2)$, where
$V = V_1\cup V_2\cup\{z\}$, $V_1\cap V_2 = \emptyset$, and $z\not\in V_1\cup V_2$.
By the inductive hypothesis, $\mathcal{H}_1$ and $\mathcal{H}_2$ admit positive threshold separators.
That is, there exist positive integer weight functions $w_i:V_i\to \mathbb{Z}_{>0}$ and
non-negative integer thresholds $t_i\in \mathbb{Z}_{\ge 0}$ for $i = 1,2$ such that
for every subset $X\subseteq V_i$, we have
$w_i(X)\ge t_i$ if and only if $e\subseteq X$ for some $e\in E_i$.

Let us define the threshold $t = Mw_1(V_1)+t_2$, where $M = w_2(V_2)+1$, and
the weight function $w:V\to \mathbb{Z}_{>0}$ by the rule
$$w(x) = \left\{
          \begin{array}{ll}
            Mw_1(x), & \hbox{if $x\in V_1$;} \\
            w_2(x), & \hbox{if $x\in V_2$;} \\
            M(w_1(V_1)-t_1)+t_2, & \hbox{if $x = z$.}
          \end{array}
        \right.$$
We claim that $(w,t)$ is a positive threshold separator of $\mathcal{H}$.
Let us first verify that the so defined weight function is indeed positive.
Since $w_i$ for $i \in \{1,2\}$ are positive and $M>0$, we have $w(x)>0$ for all $x\in V_1\cup V_2$.
Moreover, since $w_1(V_1)\ge t_1$, $M\ge 0$, and $t_2\ge 0$, we have
$w(z)\ge 0$. If $w(z) = 0$, then $w_1(V_1)=t_1$ and $t_2 = 0$,
which by \Cref{lem:technical} implies $E_1 = \{V_1\}$ and $E_2 = \{\emptyset\}$, contrary to the fact that the gluing is safe. It follows that $w(z)>0$, as claimed.

Next, we verify that $(w,t)$ is a threshold separator of $\mathcal{H}$, that is, that for every subset $X\subseteq V$, we have
$w(X)\ge t$ if and only if $e\subseteq X$ for some $e\in E$.

Suppose first that $w(X)\ge t$ for some $X\subseteq V$.
Let $X_i = X\cap V_i$ for $i = 1,2$.
For later use, we note that
\begin{equation}\label{ineq4}
w_2(X_2)\le w_2(V_2)<M\,.
\end{equation}
Suppose first that $z\in X$. Then
\begin{eqnarray*}
Mw_1(V_1)+t_2 &=& t ~\le~ w(X)\\
&=&  w(z)+w(X_1)+w(X_2)\\
&=& M(w_1(V_1)-t_1)+t_2 + Mw_1(X_1)+w_2(X_2)\,,
\end{eqnarray*}
which implies
\begin{equation}\label{ineq3}
Mw_1(X_1)+w_2(X_2)\ge Mt_1\,.
\end{equation}
If $w_1(X_1) \le t_1-1$ then, using~\eqref{ineq4}, we obtain
$$Mw_1(X_1)+w_2(X_2)\le
Mt_1-M+w_2(X_2)<Mt_1\,,$$
a contradiction with~\eqref{ineq3}.
It follows that $w_1(X_1) \ge t_1$.
Consequently there exists $e_1\in E_1$ such that $e_1\subseteq X_1$,
hence the hyperedge $e:=\{z\}\cup e_1\in E$ satisfies
$e\subseteq X$.

Now, suppose that $z\not\in X$. In this case,
$$Mw_1(V_1)+t_2 = t \le w(X) = w(X_1)+w(X_2) = Mw_1(X_1)+w_2(X_2)\,,$$
which implies
\begin{equation}\label{ineq1}
Mw_1(X_1)+w_2(X_2)\ge Mw_1(V_1)+t_2\,.
\end{equation}
We must have $X_1 = V_1$ since if there exists a vertex $v\in V_1\setminus X_1$, then
we would have
\begin{eqnarray*}
Mw_1(X_1)+w_2(X_2)&\le& Mw_1(V_1)-Mw_1(v)+w_2(X_2)\\
&\le& Mw_1(V_1)-M+w_2(X_2)\\
&<&Mw_1(V_1)+t_2\,,
\end{eqnarray*}
where the last inequality follows from~\eqref{ineq4} and $t_2\ge 0$.
Therefore, inequality~\eqref{ineq1} simplifies to
$w_2(X_2)\ge t_2$, and consequently there exists a hyperedge $e_2\in E_2$ such that
$e_2\subseteq X_2$. This implies that $\mathcal{H}$ has a hyperedge $e := V_1\cup e_2$ such that
$e\subseteq  V_1\cup X_2 = X$.

For the converse direction, suppose that $X$ is a subset of $V$ such that
$e\subseteq X$ for some $e\in E$.  We need to show that $w(X)\ge t$.
We consider two cases depending on whether $z\in e$ or not.
Suppose first that $z\in e$.
Then $e = \{z\}\cup e_1$ for some $e_1\in E_1$.
Due to the property of $w_1$, we have $w_1(e_1)\ge t_1$.
Consequently,
\begin{eqnarray*}
w(X) &\ge& w(e) \\
&=& w(z)+w(e_1)\\
&=& M(w_1(V_1)-t_1)+t_2 + Mw_1(e_1)\\
&\ge& Mw_1(V_1)-Mt_1+t_2 + Mt_1\\
&= & Mw_1(V_1)+t_2~=~t\,.
\end{eqnarray*}
Suppose now that $z\not\in e$.
Then $e = V_1\cup e_2$ for some $e_2\in E_2$.
Due to the property of $w_2$, we have $w_2(e_2)\ge t_2$.
Consequently,
\begin{eqnarray*}
w(X) &=& w(V_1)+w(e_2) \\
&=& Mw_1(V_1)+w_2(e_2)\\
&\ge & Mw_1(V_1)+t_2~=~t\,.
\end{eqnarray*}
This shows that $w(X)\ge t$ whenever $X$ contains a hyperedge of $\mathcal{H}$, and completes the proof.
\end{proof}

We now give an alternative proof of Theorem~\ref{thm:dually-Sperner} announced above.

\noindent{\bf An alternative proof of Theorem~\ref{thm:dually-Sperner}.}
Let $\mathcal{H}=(V,E)$ be a dually Sperner hypergraph. By Observation~\ref{obs:Sp-red}, its Sperner reduction is $1$-Sperner. By Theorem~\ref{thm:1-Sperner-threshold}, $\Sp(\mathcal{H})$ has a positive threshold separator, say $(w,t)$. Since $(w,t)$ is a threshold separator of $\Sp(\mathcal{H})$, it is also a threshold separator of $\mathcal{H}$, by \Cref{lem:Sperner-reduction}. Thus, $\mathcal{H}$ is threshold.\qed

\medskip
We would like to emphasize that the above proof implies the following simple efficient procedure of obtaining a threshold separator of a given dually Sperner hypergraph $\mathcal{H}$:
\begin{enumerate}[(1)]
  \item compute its Sperner reduction, $\Sp(\mathcal{H})$, and
  \item construct a positive threshold separator $(w,t)$ of $\Sp(\mathcal{H})$ recursively along a decomposition of $\Sp(\mathcal{H})$ into smaller $1$-Sperner hypergraphs given by Theorem~\ref{cor:composition} (eventually resulting in trivial $1$-Sperner hypergraphs).
\end{enumerate}
Then $(w,t)$ is a threshold separator of $\mathcal{H}$.

\subsection{Further relations between threshold, equilizable, and
$1$-Sperner hypergraphs}

The same inductive construction of a threshold separator as that given in the proof of Theorem~\ref{thm:1-Sperner-threshold} shows that every $1$-Sperner hypergraph is also equilizable (see Section~\ref{sec:background} for the definition).

\begin{theorem}\label{thm:equilizable}
Every $1$-Sperner hypergraph is equilizable.
\end{theorem}

Theorem~\ref{thm:equilizable} can be proved by slightly modifying the
above proof of Theorem~\ref{thm:1-Sperner-threshold}; for the sake of completeness, we include it in Appendix. Theorem~\ref{thm:equilizable} will be used in Section~\ref{sec:bounds} to establish an upper bound on the size of a $1$-Sperner hypergraph of a given order.

\begin{sloppypar}
Combining Theorems~\ref{thm:1-Sperner-threshold} and~\ref{thm:equilizable} shows that every $1$-Sperner hypergraph is threshold and equilizable.
In particular, the properties of thresholdness and equilizability trivially coincide within the class of $1$-Sperner hypergraphs. This raises the question of whether the two properties are comparable in the larger class of Sperner hypergraphs. This is not the case. As the following two examples show, the properties of thresholdness and equilizability are incomparable in the class of Sperner hypergraphs.
\end{sloppypar}

\medskip

\begin{example}\label{example-equilizable-not-threshold}
The following Sperner hypergraph is equilizable but not threshold:
$\mathcal{H}_1 = (V_1,E_1)$
where $V_1 = \{v_1,v_2,v_3,v_4,v_5\}$, $E_1 = \{\{v_1,v_2\}, \{v_2,v_3,v_4\}, \{v_4,v_5\}\}$. The function $w:V_1\to \mathbb{Z}_{\ge 0}$ defined by
$w(v_1) = 5$, $w(v_2) = 4$, $w(v_3) = 3$, $w(v_4) = 2$, and $w(v_5) = 7$ assigns a total weight of~$9$ to each hyperedge and to no other subset of $V_1$. Thus,~$\mathcal{H}_1$ is equilizable.
To see that $\mathcal{H}_1$ is not threshold, note that any threshold separator $(w',t')$ of $\mathcal{H}_1$ would have to satisfy $w'(v_1)+w'(v_2)\ge t'$ and $w'(v_3)+w'(v_4)\ge t'$, as well as $w'(v_1)+w'(v_3)<t'$ and $w'(v_2)+w'(v_4)<t'$, which is impossible. In other words, $\mathcal{H}_1$ fails to be threshold since it is not $2$-asummable;
cf.~Theorem~\ref{thm:ChowElgot}.
\end{example}

\medskip
\begin{example}\label{example-threshold-not-equilizable}
The following Sperner hypergraph is threshold but not equilizable:
$\mathcal{H}_2 = (V_2,E_2)$
where $V_2 = \{v_1,v_2,v_3,v_4\}$, $E_2 = \{\{v_1,v_2\},
\{v_1,v_3\}, \{v_2,v_3\}, \{v_2,v_4\}, \{v_3,v_4\}\}$.
The function $w:V_2\to \mathbb{Z}_{\ge 0}$ defined by
$w(v_1) = w(v_4) = 1$, $w(v_2) = w(v_3) = 2$, and threshold $t = 3$
form a threshold separator of $\mathcal{H}_2$.
Thus, $\mathcal{H}_2$ is threshold.
To see that $\mathcal{H}_2$ is not equilizable, note that any function $w':V_2\to \mathbb{Z}_{\ge 0}$ such that the total weight of every hyperedge is the same, say $t'$, must assign weight $t'/2$ to every vertex. Consequently, the set $\{v_1,v_4\}$, which is not a hyperedge, would also be of total weight $t'$.
\end{example}

\medskip
Furthermore, the following examples show that there exist Sperner hypergraphs that are threshold and equilizable but not $1$-Sperner.

\begin{example}\label{example-threshold-equilizable-not-1-Sperner}
For every $k\ge 2$ and $n\ge 2k$, the complete $k$-uniform hypergraph $\mathcal{H}_{n,k}$ defined with $V(\mathcal{H}_{n,k}) = \{1,\ldots, n\}$ and $E(\mathcal{H}_{n,k}) = \{X\mid X\subseteq \{1,\ldots, n\}, |X| = k\}$ is not $1$-Sperner, but it is both threshold and equilizable, as verified by the weight function that is constantly equal $1$ and threshold $t = k$.
\end{example}

\subsection{Bounds on the size of $1$-Sperner hypergraphs}
\label{sec:bounds}

We now establish some upper and lower bounds on the number of hyperedges in a $1$-Sperner hypergraph with a given number of vertices. By $\bbzero$, resp.~$\bbone$, we will denote the vector of all zeroes, resp.~ones, of appropriate dimension (which will be clear from the context).
The following lemma can be easily derived from Theorem~\ref{thm:equilizable}.

\begin{lemma}\label{lem:x}
For every $1$-Sperner hypergraph $\mathcal{H} = (V,E)$ such that
\hbox{$E\neq \emptyset$} and
\hbox{$E\neq \{\emptyset\}$},
there exists a vector
$x\in \mathbb{R}^V_{\ge 0}$ such that $A^\mathcal{H}x = \bbone$ and $\bbone^\top x \ge 1$.
\end{lemma}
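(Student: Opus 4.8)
The plan is to extract the vector $x$ directly from the equilizable representation of ${\cal H}$ guaranteed by Proposition~\ref{prop:equilizable}. First I would take a non-negative integer weight function $w:V\to\mathbb{Z}_{\ge 0}$ and a threshold $t\in\mathbb{Z}_{\ge 0}$ such that $w(X)=t$ if and only if $X\in{\cal E}$; this is exactly $\chi^e{}^\top w = t$ for every $e\in{\cal E}$, i.e.\ $A^{\cal H}w = t\,\bbone$. The natural candidate is then $x = w/t$, which gives $A^{\cal H}x = \bbone$ immediately, provided $t>0$ and $t$ makes sense to divide by.

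The main obstacle is handling the degenerate value $t=0$. If $t=0$, then $w(X)=0 \iff X\in{\cal E}$; since $w\ge 0$, the only set of weight $0$ need not be unique, but the characterization forces ${\cal E}$ to consist exactly of the sets disjoint from $\{v: w(v)>0\}$. In particular $\emptyset$ has weight $0$, so $\emptyset\in{\cal E}$, and since ${\cal H}$ is Sperner, ${\cal E}=\{\emptyset\}$ --- which is excluded by hypothesis. Hence $t\ge 1$ whenever ${\cal E}\neq\{\emptyset\}$ (and also ${\cal E}\neq\emptyset$, so that an equilizable representation with this meaning exists and ${\cal E}$ is genuinely nonempty). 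So under the stated hypotheses I may safely set $x = w/t \in \mathbb{R}^V_+$.

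It remains to check $\bbone^\top x \ge 1$, i.e.\ $w(V)\ge t$. Pick any hyperedge $e\in{\cal E}$ (possible since ${\cal E}\neq\emptyset$); then $w(e)=t$, and since $w\ge 0$ we get $w(V)\ge w(e)=t$, giving $\bbone^\top x = w(V)/t \ge 1$. This completes the argument; the only subtlety worth spelling out in the write-up is the case analysis ruling out $t=0$, everything else being a one-line division and a monotonicity observation. I would present it as: invoke Proposition~\ref{prop:equilizable} to obtain $(w,t)$, argue $t\ge 1$, set $x=w/t$, and verify the two required properties.
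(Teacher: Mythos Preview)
Your proposal is correct and follows essentially the same route as the paper: invoke Proposition~\ref{prop:equilizable} to obtain an equilizing pair $(w,t)$, rule out $t=0$ by observing that it would force $\emptyset\in{\cal E}$ and hence ${\cal E}=\{\emptyset\}$, set $x=w/t$, and verify $\bbone^\top x\ge 1$ via $w(V)\ge w(e)=t$ for any $e\in{\cal E}$. The only cosmetic difference is that you spell out the use of the Sperner property when concluding ${\cal E}=\{\emptyset\}$ from $\emptyset\in{\cal E}$, which the paper leaves implicit.
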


\begin{sloppypar}
\begin{proof}
Let $\mathcal{H} = (V,E)$ be a $1$-Sperner hypergraph as in the statement of the lemma.
By Theorem~\ref{thm:equilizable}, $\mathcal{H}$ is equilizable. Let
$w:V\to \mathbb{Z}_{\ge 0}$ be a non-negative integer weight function and
$t\in \mathbb{Z}_{\ge 0}$ a non-negative integer threshold
such that for every subset $X\subseteq V$, we have
$w(X) = t$ if and only if $X\in E$.
If $t = 0$, then $\emptyset\in E$ and consequently $E = \{\emptyset\}$, a contradiction.
It follows that $t>0$, and we can define
the vector $x\in \mathbb{R}^V_{\ge 0}$ given by
$x_v = w(v)/t$ for all $v\in V$. We claim that vector $x$ satisfies the desired properties
$A^\mathcal{H}x = \bbone$ and $\bbone^\top x \ge 1$.

Since $w(X) = t$ for all $X\in E$, we have
$A^\mathcal{H}x = \bbone$.
Since $E\neq \emptyset$, an arbitrary hyperedge $e\in E$
shows that $t = w(e) \le w(V)$. Consequently, we also have
\hbox{$\bbone^\top x = \sum_{v\in V}x_v = w(V)/t\ge 1\,.$}
\end{proof}
\end{sloppypar}

\begin{corollary}\label{cor:lambda}
For every $1$-Sperner hypergraph $\mathcal{H} = (V,E)$
and every vector $\lambda\in \mathbb{R}^E$
we have
$$\lambda^\top A^\mathcal{H} = \bbone^\top \Rightarrow \lambda^\top\bbone \ge 1\,.$$
\end{corollary}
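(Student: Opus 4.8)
The plan is to obtain this as a one-line consequence of Lemma~\ref{lem:x}, exploiting the duality between the two statements: Lemma~\ref{lem:x} supplies a nonnegative vector $x$ with $A^{\cal H}x = \bbone$ and $\bbone^\top x \ge 1$, and pairing any $\lambda$ satisfying $\lambda^\top A^{\cal H} = \bbone^\top$ against it should force $\lambda^\top\bbone \ge 1$.

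First I would dispose of the degenerate cases. If ${\cal E} = \emptyset$ or ${\cal E} = \{\emptyset\}$, then $A^{\cal H}$ has all entries equal to $0$ (it has either no rows, or a single all-zero row), so for $V \neq \emptyset$ the antecedent $\lambda^\top A^{\cal H} = \bbone^\top$ is unsatisfiable and the implication holds vacuously; the only remaining configurations have $V = \emptyset$ and can be checked directly. In every other case we have ${\cal E} \neq \emptyset$ and ${\cal E} \neq \{\emptyset\}$, so Lemma~\ref{lem:x} provides $x \in \mathbb{R}^V_+$ with $A^{\cal H}x = \bbone$ and $\bbone^\top x \ge 1$. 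Assuming $\lambda^\top A^{\cal H} = \bbone^\top$, I would then compute
$$\lambda^\top\bbone = \lambda^\top\bigl(A^{\cal H}x\bigr) = \bigl(\lambda^\top A^{\cal H}\bigr)x = \bbone^\top x \ge 1,$$
where the first equality is $A^{\cal H}x = \bbone$, the second is associativity of matrix multiplication, the third substitutes the hypothesis, and the last is the inequality $\bbone^\top x \ge 1$ from Lemma~\ref{lem:x}. This gives the conclusion.

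I do not anticipate any genuine difficulty: essentially all the content is already carried by Lemma~\ref{lem:x}, and hence ultimately by the equilizability of $1$-Sperner hypergraphs (Proposition~\ref{prop:equilizable}) together with the decomposition and composition theorems. The only points requiring care are matching up dimensions — $A^{\cal H}$ has its rows indexed by ${\cal E}$ and its columns by $V$, so $\lambda \in \mathbb{R}^{\cal E}$ acts on the left and $x \in \mathbb{R}^V$ on the right — and remembering to peel off the cases ${\cal E} \in \{\emptyset,\{\emptyset\}\}$ before invoking Lemma~\ref{lem:x}, since that lemma explicitly excludes them.
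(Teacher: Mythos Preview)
Your proposal is correct and follows essentially the same argument as the paper: handle the degenerate cases ${\cal E}\in\{\emptyset,\{\emptyset\}\}$ separately (where the antecedent is vacuous), then in the remaining cases invoke Lemma~\ref{lem:x} to obtain $x$ and compute $\lambda^\top\bbone = \lambda^\top A^{\cal H}x = \bbone^\top x \ge 1$. The paper's proof is verbatim this pairing argument.
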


\begin{proof}
If \hbox{$E= \emptyset$} or
\hbox{$E= \{\emptyset\}$}, then the left hand side of the above implication is always false.
In all other cases, by Lemma~\ref{lem:x}, there exists a vector $x\in \mathbb{R}^V$ such that $A^\mathcal{H}x = \bbone$ and $\bbone^\top x \ge 1$.
Therefore, equation
$\lambda^\top A^\mathcal{H} = \bbone^\top$ implies
$\lambda^\top\bbone = \lambda^\top A^\mathcal{H}x = \bbone^\top x \ge 1$.
\end{proof}

The composition theorem and the above corollary imply the following useful property of $1$-Sperner hypergraphs.

\begin{theorem}\label{prop:linear-independence}
For every $1$-Sperner hypergraph $\mathcal{H} = (V,E)$ such that \hbox{$E\neq \{\emptyset\}$},
the characteristic vectors of its hyperedges are linearly independent
(over the field of real numbers).
\end{theorem}

\begin{proof}
We use induction on $|V|$. If $|V|\le 1$, then the statement holds since $E\neq \{\emptyset\}$.

Suppose now that $|V|>1$. Then by  Theorem~\ref{thm:decomposition},
$\mathcal{H}$ is the gluing of two $1$-Sperner hypergraphs, say
$\mathcal{H} = \mathcal{H}_1\odot  \mathcal{H}_2$ with
$\mathcal{H}_1 = (V_1,E_1)$ and
$\mathcal{H}_2 = (V_2,E_2)$, where
$V = V_1\cup V_2\cup\{z\}$, $V_1\cap V_2 = \emptyset$, and $z\not\in V_1\cup V_2$.

Let $\lambda\in \mathbb{R}^E$ be a vector such that
$\lambda^\top A^\mathcal{H} = \bbzero$.
Let $\lambda^1$ and $\lambda^2$ be the restrictions of $\lambda$ to the
hyperedges corresponding to $E_1$ and
$E_2$, respectively.
The equation
$\lambda^\top A^\mathcal{H} = \bbzero$
implies the system of equations
\begin{eqnarray*}
  (\lambda^1)^\top\bbone &=& 0\in \mathbb{R}\,, \\
  (\lambda^1)^\top A^{\mathcal{H}_1}+((\lambda^2)^\top \bbone)\bbone^\top  &=& \bbzero^\top\in \mathbb{R}^{V_1}\,, \\
  (\lambda^2)^\top  A^{\mathcal{H}_2} &=& \bbzero^\top\in \mathbb{R}^{V_2}\,.
\end{eqnarray*}

In all cases, the inductive hypothesis implies that
$\lambda^1 = \bbzero^\top\in \mathbb{R}^{E_1}$ and
$\lambda^2 = \bbzero^\top\in \mathbb{R}^{E_2}$, except in the case
when $E_2 = \{\emptyset\}$. In this case, $\lambda_2$ is a single number,
say
$\lambda^*$.
If $\lambda^* = 0$, then $\lambda_1 = \bbzero^\top$ follows by the induction hypothesis.
If $\lambda^* \neq 0$, then
$\hat \lambda := -\lambda_1/\lambda^*$ satisfies
$\hat \lambda^\top A^{\mathcal{H}_1}= \bbone^\top$ and
$\hat \lambda^\top \bbone = \bbzero$,
contradicting Corollary~\ref{cor:lambda}.
\end{proof}

Since the characteristic vectors of the hyperedges of an $n$-vertex $1$-Sperner hypergraph are linearly independent vectors in $\mathbb{R}^n$,
we obtain the following upper bound on the size of a $1$-Sperner hypergraph
in terms of its order.

\begin{corollary}\label{cor:edges}
For every $1$-Sperner hypergraph $\mathcal{H} = (V,E)$ with $V\neq \emptyset$, we have $|E|\le |V|$.
\end{corollary}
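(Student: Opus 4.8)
The plan is to derive Corollary~\ref{cor:edges} directly from the preceding proposition on linear independence of characteristic vectors, treating the degenerate cases separately. First I would dispose of the case ${\cal E} = \{\emptyset\}$: here $|{\cal E}| = 1$ and $|V|\ge 1$ by hypothesis, so $|{\cal E}|\le |V|$ holds trivially. Likewise if ${\cal E} = \emptyset$, then $|{\cal E}| = 0 \le |V|$. In every remaining case we have ${\cal E}\neq\{\emptyset\}$, so the proposition applies and tells us that the characteristic vectors $\{\chi^e : e\in {\cal E}\}$ form a linearly independent set in the vector space $\mathbb{R}^V$.

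The key step is then the elementary fact from linear algebra that any linearly independent subset of $\mathbb{R}^V$ has cardinality at most $\dim \mathbb{R}^V = |V|$. Applying this with the independent set being the $|{\cal E}|$ characteristic vectors of the hyperedges immediately yields $|{\cal E}|\le |V|$. One subtlety worth a sentence: distinct hyperedges have distinct characteristic vectors (this is immediate, and in any case a Sperner — indeed $1$-Sperner — hypergraph cannot have repeated hyperedges), so the number of characteristic vectors really is $|{\cal E}|$ and not something smaller.

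I do not expect any genuine obstacle here — the corollary is a one-line consequence of the proposition plus the dimension bound. The only thing to be careful about is matching the hypotheses: the proposition requires ${\cal E}\neq\{\emptyset\}$ whereas the corollary only requires $V\neq\emptyset$, so the proof must explicitly branch on whether ${\cal E} = \{\emptyset\}$ before invoking the proposition. If one wanted a fully self-contained argument avoiding the proposition, an alternative would be to induct on $|V|$ using Theorem~\ref{thm:decomposition} and the block structure of $A^{{\cal H}_1\odot{\cal H}_2}$, since the rows of the glued incidence matrix split into $m_1$ rows supported differently from the $m_2$ rows; but since the linear-independence proposition is already available, the short route through it is clearly preferable.
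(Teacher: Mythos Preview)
Your proof is correct and is exactly the argument the paper intends: the corollary is stated immediately after the linear-independence proposition with no separate proof, so it is meant to follow from the dimension bound $|{\cal E}|\le \dim \mathbb{R}^V = |V|$, with the trivial case ${\cal E}=\{\emptyset\}$ handled separately as you do.
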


\begin{sloppypar}
The bound $|E| \le |V|$ can also be proved more directly from the decomposition theorem (Theorem~\ref{cor:composition}), using induction on the number of vertices and analyzing various cases according to whether the two constituent hypergraphs have non-empty vertex set or not.
We decided to include the proof based on~\Cref{prop:linear-independence}, since linear independence is an interesting property of $1$-Sperner hypergraphs and the inequality $|E|\le |V|$ is just one consequence of that.
\end{sloppypar}

\medskip
We now turn to the lower bound. Recall that a vertex $u$ in a hypergraph $\mathcal{H} = (V,E)$ is said to be \emph{universal} (resp., \emph{isolated}) if it is contained in all (resp., in no) hyperedges. Moreover, two vertices $u,v$ of a hypergraph $\mathcal{H} = (V,E)$ are \emph{twins} if
they are contained in exactly the same hyperedges.

Corollary~\ref{cor:edges} gives an upper bound on the size of a $1$-Sperner hypergraph in terms of its order. Can we prove a lower bound of a similar form? In general not, since adding universal vertices, isolated vertices, or twin vertices preserves the $1$-Sperner property and the size, while it increases the order. However, as we show next, for $1$-Sperner hypergraphs without universal, isolated, and twin vertices, the following sharp lower bound on the size in terms of the order holds.

\begin{proposition}\label{prop:lower-bound}
For every $1$-Sperner hypergraph $\mathcal{H} = (V,E)$
with $|V|\ge 2$ and without universal, isolated, and twin vertices,
we have the following sharp lower bound
$$|E|\ge \left\lceil\frac{|V|}{2}\right\rceil+1\,.$$
\end{proposition}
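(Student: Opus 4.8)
The plan is to prove the equivalent inequality $2|{\cal E}|\ge |V|+2$; this is equivalent to the stated bound because for odd $|V|$ the number $|V|+2$ is odd while $2|{\cal E}|$ is even, so $2|{\cal E}|\ge |V|+2$ already forces $2|{\cal E}|\ge |V|+3$. I would prove $2|{\cal E}|\ge|V|+2$ by induction on $|V|$ using the composition theorem (Theorem~\ref{cor:composition}). The subtlety is that if ${\cal H}={\cal H}_1\odot{\cal H}_2$ is a safe gluing and ${\cal H}$ has no universal, isolated, or twin vertices, then ${\cal H}_1$ and ${\cal H}_2$ need not have this property. A short check shows: ${\cal H}_1$ has no universal vertex and no twins, ${\cal H}_2$ has no isolated vertex and no twins, ${\cal E}_1,{\cal E}_2\neq\emptyset$ (otherwise the glue vertex $z$ would be isolated or universal in ${\cal H}$), and — crucially — ${\cal H}_1$ and ${\cal H}_2$ cannot \emph{simultaneously} possess an isolated and a universal vertex, since such a pair would be twins in ${\cal H}$. (Being twin-free, each of ${\cal H}_1,{\cal H}_2$ has at most one isolated and at most one universal vertex.) So I would carry the stronger statement $R(n)$ through the induction: for every $1$-Sperner hypergraph ${\cal H}$ with $|V|=n$, at least one hyperedge, no universal vertices, no twins, and at most one isolated vertex, one has $2|{\cal E}|\ge n+1$, and moreover $2|{\cal E}|\ge n+2$ if in addition ${\cal H}$ has no isolated vertex and $n\ge 2$.

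The cases $n\le 1$ are immediate (the only admissible hypergraphs are $(\emptyset,\{\emptyset\})$ and $(\{u\},\{\emptyset\})$), and for the ``moreover'' part with $n=2$ one checks that ${\cal H}=(\{a,b\},\{\{a\},\{b\}\})$ is forced. In the inductive step, if ${\cal H}$ has an isolated vertex $u$ then $n\ge 3$ (the case $n=2$ is easily seen impossible), and deleting $u$ yields a $1$-Sperner hypergraph on $n-1$ vertices with the same hyperedges and no isolated vertex, to which the ``moreover'' clause of $R(n-1)$ applies, giving $2|{\cal E}|\ge n+1$. If ${\cal H}$ has no isolated vertex and $n\ge 3$, write ${\cal H}={\cal H}_1\odot{\cal H}_2$ by Theorem~\ref{cor:composition}; replacing ${\cal H}$ by $\overline{\cal H}$ if necessary (legitimate by Propositions~\ref{prop:complement} and~\ref{prop:complement-2}, since complementation preserves all the relevant properties and sends this decomposition to $\overline{{\cal H}_2}\odot\overline{{\cal H}_1}$), we may assume ${\cal H}_1$ has no isolated vertex. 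Then ${\cal H}_1$ satisfies the hypotheses of $R(n_1)$ and $\overline{{\cal H}_2}$ those of $R(n_2)$, where $n_1+n_2=n-1$; adding the two bounds gives $2|{\cal E}|\ge (n_1+1)+(n_2+1)=n+1$, and the extra $+1$ comes from the ``moreover'' clause applied to ${\cal H}_1$ when $n_1\ge 2$. The case $n_1=1$ is vacuous (no one-vertex $1$-Sperner hypergraph has a hyperedge yet no universal and no isolated vertex), and in the case $n_1=0$ one has ${\cal H}_1=(\emptyset,\{\emptyset\})$, $|{\cal E}_1|=1$, $n_2=n-1\ge 2$, and the plain bound of $R(n-1)$ for $\overline{{\cal H}_2}$ already gives $2|{\cal E}_2|\ge n$, whence $2|{\cal E}|=2+2|{\cal E}_2|\ge n+2$.

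The main obstacle is exactly this off-by-one in the inductive step: the two factors naively contribute only $2|{\cal E}|\ge n+1$, and closing the gap needs both the strengthened ``$\ge n+2$'' clause of the induction hypothesis and the ``not simultaneously bad'' observation forced by twin-freeness, together with the (easy) separate handling of the boundary cases $n_1\in\{0,1\}$.

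Finally, for sharpness I would exhibit extremal examples for all $n\ge 2$. Take ${\cal H}^{(2)}=(\{a,b\},\{\{a\},\{b\}\})$, and let ${\cal H}^{(3)}$ be the hypergraph on three vertices whose hyperedges are the three $2$-element subsets; both are $1$-Sperner, free of universal, isolated, and twin vertices, and attain the bound. Then, if ${\cal H}$ is any such extremal example on $n$ vertices, the gluing ${\cal H}':=(\{c\},\{\emptyset\})\odot{\cal H}$ is again $1$-Sperner (the gluing is safe), has $n+2$ vertices and $|{\cal E}({\cal H})|+1$ hyperedges, and is still free of universal, isolated, and twin vertices — a routine verification using ${\cal E}({\cal H})\neq\emptyset$ and the absence of universal vertices in ${\cal H}$. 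Iterating from ${\cal H}^{(2)}$ and ${\cal H}^{(3)}$ produces extremal hypergraphs for every $n\ge 2$.
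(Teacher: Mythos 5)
Your proof is correct, and for the inequality it is essentially the paper's argument in different packaging: both induct via Theorem~\ref{cor:composition}, both use complementation (Propositions~\ref{prop:complement} and~\ref{prop:complement-2}) for a without-loss-of-generality normalization, and both hinge on the same key observation that an isolated vertex of ${\cal H}_1$ together with a universal vertex of ${\cal H}_2$ would form a twin pair in ${\cal H}$. The difference is bookkeeping: you carry the strengthened statement $R(n)$ (tolerating one isolated vertex and recording the $+1$ versus $+2$ gap) and apply it to ${\cal H}_1$ and to $\overline{{\cal H}_2}$, whereas the paper keeps the original statement as the induction hypothesis, deletes the possible isolated vertex of ${\cal H}_1$, re-glues so that ${\cal H}_2$ has no universal vertex, and then splits into the cases $n_2\ge 2$, $n_2=1$, $n_2=0$ with base cases $n\le 4$; the arithmetic, $\frac{n_1+2}{2}+\frac{n_2+1}{2}=\frac{n+2}{2}$, is the same in both versions, so your route trades the paper's explicit case analysis for a slightly heavier induction hypothesis. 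Where you genuinely depart from the paper is sharpness: the paper's recursive doubling family ${\cal H}_k$ attains the bound only for $|V|=2^k-2$, while your construction ${\cal H}\mapsto(\{c\},\{\emptyset\})\odot{\cal H}$, seeded with the two-vertex and three-vertex extremal examples, yields extremal $1$-Sperner hypergraphs without universal, isolated, and twin vertices for \emph{every} $|V|\ge 2$, which is a stronger form of sharpness than the one exhibited in the paper.
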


\begin{proof}
We use induction on $n = |V|$. For $n\in \{2,3,4\}$, it can be easily verified that the statement holds.

Now, let $\mathcal{H} = (V,E)$ be a $1$-Sperner hypergraph with $n\ge 5$ and without universal vertices, isolated vertices, and twin vertices.
By Theorem~\ref{thm:decomposition},
$\mathcal{H}$ is the gluing of two \hbox{$1$-Sperner} hypergraphs, say
$\mathcal{H} = \mathcal{H}_1\odot  \mathcal{H}_2$ with
$\mathcal{H}_1 = (V_1,E_1)$ and
$\mathcal{H}_2 = (V_2,E_2)$, where
$V = V_1\cup V_2\cup\{z\}$, $V_1\cap V_2 = \emptyset$, and $z\not\in V_1\cup V_2$.
Since $\mathcal{H}$ has no twins, $\mathcal{H}_1$ and $\mathcal{H}_2$ also have no twins.
Let $n_i = |V_i|$ and $m_i = |E_i|$ for $i = 1,2$, and let $m = |E|$.

We have $m = m_1+m_2$, and by the rules of the gluing, $n = n_1+n_2+1$.
By Proposition~\ref{prop:complement}, we may assume that $n_1\ge n_2$ (otherwise, we can consider the complementary hypergraph).
In particular, $n_1\ge 3$.
The fact that $\mathcal{H}$ does not have a universal vertex implies
$\mathcal{H}_1$ does not have a universal vertex. Similarly,
$\mathcal{H}_2$ does not have an isolated vertex.
Since $\mathcal{H}$ does not have any pairs of twin vertices, we have that either $\mathcal{H}_1$ does not have a isolated vertex, or
$\mathcal{H}_2$ does not have a universal vertex. We may assume that
$\mathcal{H}_2$ does not have a universal vertex (otherwise, we consider a different gluing in which we delete the universal vertex from $\mathcal{H}_2$ and add an isolated vertex to $\mathcal{H}_1$).
Since $\mathcal{H}_2$ is a Sperner hypergraph without an isolated or a universal vertex, we have $n_2\neq 1$.

Suppose first that $n_2\ge 2$. We apply the inductive hypothesis for $\mathcal{H}_1'$ and $\mathcal{H}_2$, where
$\mathcal{H}_1'$ is the hypergraph obtained from $\mathcal{H}_1$ by deleting from it the isolated vertex (if it exists).
Letting $n_1' = |V(\mathcal{H}_1')|$ and
$m_1' = |E(\mathcal{H}_1')|$, we thus have $n_1'\ge n_1-1$ and also $n_1'\ge 2$.
We obtain
$$m_1 = m_1'\ge \frac{n_1'+2}{2}\ge \frac{n_1+1}{2}$$
and
$$m_2\ge \frac{n_2}{2}+1\,.$$
Consequently,
$$m = m_1+m_2\ge \frac{n_1+1}{2}+\frac{n_2+2}{2} = \frac{n_1+n_2+3}{2} = \frac{n}{2}+1\,,$$
and, since $m$ is integer, the desired inequality $$m\ge \left\lceil\frac{n}{2}\right\rceil+1$$ follows.

Suppose now that $n_2 = 0$. In this case, since $\mathcal{H}$ does not have a universal vertex, we must have
$E_2 = \{\emptyset\}$ and $m_2 = 1$. As above, let $\mathcal{H}_1'$ be the hypergraph obtained from $\mathcal{H}_1$ by deleting from it the isolated vertex (if it exists). Letting $n_1' = |V(\mathcal{H}_1')|$ and $m_1' = |E(\mathcal{H}_1')|$, we obtain, by applying the inductive hypothesis to $\mathcal{H}_1'$,
$$m_1 = m_1'\ge \frac{n_1'}{2}+1\ge \frac{n_1+1}{2}\,,$$
which implies $$m = m_1+1\ge \frac{n_1+1}{2}+1 = \frac{n}{2}+1 = \frac{n}{2}+1\,.$$
This completes the proof of the inequality.

To see that the inequality is sharp, consider the following recursively defined family of hypergraphs $\mathcal{H}_k$ for $k\ge 2$:
\begin{itemize}
  \item $\mathcal{H}_2 = (\{v_1,v_2\},\{\{v_1\},\{v_2\}\})$.
  \item For $k>2$, we set $\mathcal{H}_k = \mathcal{H}_{k-1}'\odot  \mathcal{H}_{k-1}$  where
$\mathcal{H}_{k-1}'$ is the hypergraph obtained from a disjoint copy of $\mathcal{H}_{k-1}$ by adding to it an isolated vertex.
\end{itemize}
An inductive argument shows that for every $k\ge 2$, we have
$n_k = |V(\mathcal{H}_k)|  = 2^k-2$,
$m_k = |E(\mathcal{H}_k)|  = 2^{k-1}$, and consequently
$m_k = \left\lceil\frac{n_k}{2}\right\rceil+1\,.$
\end{proof}

\subsection{Minimal transversals of $1$-Sperner hypergraphs}

Recall that a \emph{transversal} of $\mathcal{H}$ is a set of vertices intersecting all hyperedges of $\mathcal{H}$.

\begin{theorem}\label{thm:min-transversal-of-1-Sperner}
The number of minimal transversals of every $1$-Sperner hypergraph $\mathcal{H} = (V,E)$ is at most
$$\max\left\{1,|V|,{|V|\choose 2}\right\}\,.$$
This bound is sharp.
Moreover, the family of minimal transversals of a given $1$-Sperner hypergraph $\mathcal{H} = (V,E)$ can be generated in time $\mathcal{O}(|V|^3|E|)$.
\end{theorem}

\begin{proof}
We first prove the upper bound on the size of the transversal hypergraph $\mathcal{H}^T$. We use induction on $n = |V|$. The claim is clear for $n = 0$.
For $n\in \{1,2,3\}$, the claim is that every $1$-Sperner hypergraph
of order $n$ has at most $n$ minimal transversals. This is true since for these small values of $n$, no family of pairwise incomparable subsets of an $n$-element set can have more than $n$ elements.

Now, let $\mathcal{H} = (V,E)$ be a $1$-Sperner hypergraph with $n\ge 4$.
By Theorem~\ref{thm:decomposition}, $\mathcal{H}$ is the gluing of two \hbox{$1$-Sperner} hypergraphs, say $\mathcal{H} = \mathcal{H}_1\odot  \mathcal{H}_2$ with $\mathcal{H}_1 = (V_1,E_1)$ and
$\mathcal{H}_2 = (V_2,E_2)$, where
$V = V_1\cup V_2\cup\{z\}$, $V_1\cap V_2 = \emptyset$, and $z\not\in V_1\cup V_2$. Denoting $n_i = |V_i|$ for $i\in \{1,2\}$, the inductive hypothesis implies that $|E({\mathcal H}_i^T)|\le \max\{1,n_i,{n_i\choose 2}\}$ for $i\in \{1,2\}$.
Suppose first that $n_2 = 0$.
In this case,
$E({\mathcal H}_2^T)$ is either $\emptyset$ (if
$E({\mathcal H}_2) = \{\emptyset\}$) or
$\{\emptyset\}$ (if
$E({\mathcal H}_2) = \emptyset$).
By \Cref{prop:gluing-transversal}, it suffices to show the inequality
$$\max\left\{1,n_1,{n_1\choose 2}\right\}+\max\left\{1,n_1\right\}\le \max\left\{1,n,{n\choose 2}\right\}\,.$$
Using $n\ge 4$ and consequently $n_1\ge 3$, the inequality reduces to
${n_1\choose 2}+n_1\le {n_1+1\choose 2}$, which is satisfied with equality.

Suppose now that $n_2 \ge 1$.
By \Cref{prop:gluing-transversal}, it suffices to show the inequality
$$\max\left\{1,n_1,{n_1\choose 2}\right\}+\max\left\{1,n_2,{n_2\choose 2}\right\}+n_1\le \max\left\{1,n,{n\choose 2}\right\}\,.$$ The inequality holds for any $n\ge 4$ and any $n_1$, $n_2$ such that $n_2 \ge 1$ and $n_1+n_2+1 = n$.
The details are left to the reader.

To see that the inequality is sharp, let $n\ge 3$ and consider the family
$\mathcal{H}_{n,n-1}$ of complete \hbox{$(n-1)$-uniform} hypergraphs; see Example~\ref{example-threshold-equilizable-not-1-Sperner}.
It is clear that the hypergraph $\mathcal{H}_{n,n-1}$ is $1$-Sperner.
Its transversal hypergraph is the complete $2$-uniform hypergraph
$\mathcal{H}_{n,2}$, which is of size ${n\choose 2}$.

It remains to show that the transversal hypergraph $\mathcal{H}^T$
of a given $1$-Sperner hypergraph $\mathcal{H} = (V,E)$
with $n = |V|$ and $m = |E|$ can be generated in time $\mathcal{O}(n^3m)$.
We may assume that $n\ge 1$.
The algorithm is as follows:
\begin{enumerate}
  \item First, we compute the $z$-decomposition $\mathcal{H} = \mathcal{H}_1\odot  \mathcal{H}_2$ (for some $z\in V$).
  \item Secondly, for $i\in \{1,2\}$, we recursively generate the transversal hypergraphs $\mathcal{H}_1^T$ and $\mathcal{H}_2^T$.
\item Finally, we compute the transversal hypergraph $\mathcal{H}^T$ using
\Cref{prop:gluing-transversal}.
\end{enumerate}
Let $T(n,m)$ denote the running time of this algorithm.
Step 1 of the algorithm can be done in time $\mathcal{O}(n^2m)$ by~\Cref{prop:complexity}.
For Step 2, let $\mathcal{H}_1 = (V_1,E_1)$ and $\mathcal{H}_2 = (V_2,E_2)$, where $V = V_1\cup V_2\cup\{z\}$, $V_1\cap V_2 = \emptyset$, and $z\not\in V_1\cup V_2$. Denoting $n_i = |V_i|$ and $m_i = |E_i|$ and
for $i\in \{1,2\}$, we can do Step 2 in time
$T(n_1,m_1)+T(n_2,m_2)$.
For Step 3, let $F$ be defined as in the proof of
\Cref{prop:gluing-transversal}, and notice that $F$ is not equal to its Sperner reduction if and only if one of the following happens: (i) the empty set is a minimal transversal of $\mathcal{H}_1$, (ii) the empty set is a minimal transversal of $\mathcal{H}_2$, or (iii) $\{u\}$ is a minimal transversal of $\mathcal{H}_1$ for some $u\in V_1$.
These conditions can be verified either in constant time (in cases (i) and (ii)) or in $\mathcal{O}(|E(\mathcal{H}_1^T)|) = \mathcal{O}(n_1^2)$ time (in case (iii)).

The above reasoning leads to the inequality
$T(n,m) \le \mathcal{O}(n^2m)+T(n_1,m_1)+T(n_2,m_2)$.
Since $n = n_1+n_2+1$ and $m = m_1+m_2$,  this inequality implies that
$T(n,m) = \mathcal{O}(n^3m)$, as claimed.
\end{proof}

\subsection*{Acknowledgements}

\begin{sloppypar}
The authors are grateful to Nina Chiarelli and Sylwia Cichacz for helpful discussions. The second author was partially funded by the Russian Academic Excellence Project `5-100'. The work for this paper was done in the framework of bilateral projects between Slovenia and the USA, partially financed by the Slovenian Research Agency (BI-US/$14$--$15$--$050$, BI-US/$16$--$17$--$030$, and BI-US/$18$--$19$--$029$). The work of the third author is supported in part by the Slovenian Research Agency (I$0$-$0035$, research program P$1$-$0285$, research projects N$1$-$0032$, J$1$-$6720$, and J$1$-$7051$).
\end{sloppypar}

\def\ocirc#1{\ifmmode\setbox0=\hbox{$#1$}\dimen0=\ht0 \advance\dimen0
  by1pt\rlap{\hbox to\wd0{\hss\raise\dimen0
  \hbox{\hskip.2em$\scriptscriptstyle\circ$}\hss}}#1\else {\accent"17 #1}\fi}

\section*{Appendix: Proof of Theorem~\ref{thm:equilizable}}

\begin{propositionEquilizable}[restated]
Every $1$-Sperner hypergraph is equilizable.
\end{propositionEquilizable}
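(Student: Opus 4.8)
The plan is to reprove Proposition~\ref{prop:inclusion} with every occurrence of the inequality $w(\cdot)\ge t$ replaced by the equation $w(\cdot)=t$, so that the object produced is in fact an \emph{equalizer}. Call a pair $(w,t)$ a \emph{nice equalizer} of a $1$-Sperner hypergraph ${\cal H}=(V,{\cal E})$ if $w:V\to\mathbb{Z}_{>0}$ is strictly positive, $t\in\mathbb{Z}_{\ge0}$, and: (i) for every $X\subseteq V$ we have $w(X)=t$ if and only if $X\in{\cal E}$; (ii) if $w(V)=t$ then ${\cal E}=\{V\}$; (iii) if $t=0$ then ${\cal E}=\{\emptyset\}$. (Properties (ii) and (iii) in fact follow from (i) and strict positivity, but keeping them makes the induction parallel to that of Proposition~\ref{prop:inclusion}.) I would prove by induction on $|V|$ that every $1$-Sperner hypergraph has a nice equalizer, which gives equilizability at once.

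For $|V|=0$ one argues as in Proposition~\ref{prop:inclusion}. In the inductive step I would first peel off an isolated vertex $u$, if there is one: take a nice equalizer $(w',t')$ of the hypergraph ${\cal H}-u$ (same hyperedges, vertex set $V\setminus\{u\}$) by induction and extend it by $w=w'$ on $V\setminus\{u\}$, $w(u)=t'+1$, $t=t'$; then every $X$ containing $u$ has weight $\ge t'+1>t$, so (i)--(iii) follow at once. Hence we may assume ${\cal H}$ has no isolated vertex, and by Theorem~\ref{cor:composition} it is a safe gluing ${\cal H}={\cal H}_1\odot{\cal H}_2$; since the glued vertex $z$ is not isolated in ${\cal H}$, it lies in a hyperedge, so ${\cal E}_1\neq\emptyset$ and hence $w_1(V_1)\ge t_1$ for any nice equalizer $(w_1,t_1)$ of ${\cal H}_1$.

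Take nice equalizers $(w_1,t_1)$, $(w_2,t_2)$ of ${\cal H}_1,{\cal H}_2$ and define $M,t,w$ by the \emph{same} formulas as in Proposition~\ref{prop:inclusion}: $M=w_2(V_2)+1$, $t=Mw_1(V_1)+t_2$, $w(x)=Mw_1(x)$ on $V_1$, $w(x)=w_2(x)$ on $V_2$, $w(z)=M(w_1(V_1)-t_1)+t_2$. Then $w(z)\ge0$ because $w_1(V_1)\ge t_1$, and $w(z)=0$ would give ${\cal E}_1=\{V_1\}$, ${\cal E}_2=\{\emptyset\}$, contradicting safeness; so $w$ is strictly positive, and (ii), (iii) hold exactly as in Proposition~\ref{prop:inclusion}. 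For the ``if'' part of (i), a hyperedge $e$ satisfies $w(e)=t$ by the computation of Proposition~\ref{prop:inclusion} in the two cases $z\in e$ and $z\notin e$, now using the \emph{equalities} $w_i(e_i)=t_i$ granted by the inductive hypothesis. For the ``only if'' part, let $w(X)=t$, $X_i=X\cap V_i$, and note $0\le w_2(X_2)\le w_2(V_2)=M-1$. If $z\in X$ the equation becomes $Mw_1(X_1)+w_2(X_2)=Mt_1$; since $w_2(X_2)<M$ and the other terms are a multiple of $M$, we get $w_2(X_2)=0$, hence $X_2=\emptyset$ by strict positivity, $w_1(X_1)=t_1$, so $X_1\in{\cal E}_1$ and $X=\{z\}\cup X_1\in{\cal E}$. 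If $z\notin X$ the estimate from Proposition~\ref{prop:inclusion} (using $w_2(X_2)<M$, $t_2\ge0$) forces $X_1=V_1$, then $w_2(X_2)=t_2$, so $X_2\in{\cal E}_2$ and $X=V_1\cup X_2\in{\cal E}$.

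The main difficulty relative to Proposition~\ref{prop:inclusion} is the ``only if'' part of (i): we must prevent any set other than a genuine hyperedge from hitting the value $t$. This is exactly where strict positivity of $w_1,w_2$ is indispensable --- it is what turns $w_i(X_i)=w_i(V_i)$ and $w_2(X_2)=0$ into $X_i=V_i$ and $X_2=\emptyset$ --- and where the gap $M=w_2(V_2)+1$ does the work, being just large enough that the ${\cal H}_2$-part of $w$ can never span a multiple of $M$. The one point where ``essentially the same proof'' needs care is that the formula for $w(z)$ can be negative when ${\cal H}_1$ degenerates to $(\emptyset,\emptyset)$, i.e.\ when $z$ is isolated in ${\cal H}$; the preliminary peeling of isolated vertices removes this case. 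A factor equal to $(\emptyset,\{\emptyset\})$ or $(\emptyset,\emptyset)$ on the \emph{second} side causes no such trouble, since then $t_2>0$ or (by safeness) $w_1(V_1)>t_1$ keeps $w(z)$ positive.
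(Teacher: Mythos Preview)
Your proof is correct and follows the same route as the paper's Appendix proof: induction on $|V|$ via Theorem~\ref{cor:composition}, using the identical weight formula from Proposition~\ref{prop:inclusion} and the same modular estimate $0\le w_2(X_2)<M$ to recover $X_1,X_2$ from $w(X)=t$. Your preliminary peeling of an isolated vertex is not in the paper but is a clean way to ensure ${\cal E}_1\neq\emptyset$ (and hence $w_1(V_1)\ge t_1$, so $w(z)>0$); the paper instead simply cites the positivity argument of Proposition~\ref{prop:inclusion} without isolating this corner case.
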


\begin{proof}
We will show by induction on $n = |V|$ that
for every $1$-Sperner hypergraph $\mathcal{H} = (V,E)$
there exists a positive integer weight function $w:V\to \mathbb{Z}_{>0}$ and
a non-negative integer threshold $t\in \mathbb{Z}_{\ge 0}$  such that
for every subset $X\subseteq V$, we have
$w(X)= t$ if and only if $X\in E$.
This will establish the equilizability of $\mathcal{H}$.

{For $n = 0$, we can take the (empty) mapping given by $w(x) = 1$ for all $x\in V$
and the threshold $$t = \left\{
                         \begin{array}{ll}
                           1, & \hbox{if $E = \emptyset$;} \\
                           0, & \hbox{if $E = \{\emptyset\}$.}
                         \end{array}
                       \right.
$$}

Now, let $\mathcal{H} = (V,E)$ be a $1$-Sperner hypergraph with $n\ge 1$.
By Theorem~\ref{cor:composition}, $\mathcal{H}$ is a safe gluing of two $1$-Sperner hypergraphs, say
$\mathcal{H} = \mathcal{H}_1\odot  \mathcal{H}_2$ with
$\mathcal{H}_1
= (V_1,E_1)$ and
$\mathcal{H}_2 = (V_2,E_2)$, where
$V = V_1\cup V_2\cup\{z\}$, $V_1\cap V_2 = \emptyset$, and $z\not\in V_1\cup V_2$.
By the inductive hypothesis, $\mathcal{H}_1$ and $\mathcal{H}_2$ are equilizable, that is, there exist
positive integer weight functions $w_i:V_i\to \mathbb{Z}_{>0}$ and
non-negative integer thresholds $t_i\in \mathbb{Z}_{\ge 0}$ for $i = 1,2$ such that
for every subset $X\subseteq V_i$, we have
$w_i(X)= t_i$ if and only if $X\in E_i$.

Let us define the threshold $t = Mw_1(V_1)+t_2$, where $M = w_2(V_2)+1$, and
the weight function $w:V\to \mathbb{Z}_{>0}$ by the rule
$$w(x) = \left\{
          \begin{array}{ll}
            Mw_1(x), & \hbox{if $x\in V_1$;} \\
            w_2(x), & \hbox{if $x\in V_2$;} \\
            M(w_1(V_1)-t_1)+t_2, & \hbox{if $x = z$.}
          \end{array}
        \right.$$
Since the weight function defined above coincides with the one in the proof of Theorem~\ref{thm:1-Sperner-threshold}, this function is indeed strictly positive.

We claim that for every subset $X\subseteq V$, we have
$w(X)= t$ if and only if $X\in E$. This will establish the equilizability of $\mathcal{H}$.

Suppose first that $w(X)= t$ for some $X\subseteq V$. Let $X_i = X\cap V_i$ for $i = 1,2$.
For later use, we note that
\begin{equation}\label{ineq5}
w_2(X_2)\le w_2(V_2)<M\,.
\end{equation}
We consider two cases depending on whether $z\in X$ or not.
Suppose first that $z\in X$. Then
\begin{eqnarray*}
Mw_1(V_1)+t_2 &=& t ~=~ w(X)\\
&=&  w(z)+w(X_1)+w(X_2)\\
&=& M(w_1(V_1)-t_1)+t_2 + Mw_1(X_1)+w_2(X_2)\,,
\end{eqnarray*}
which implies
\begin{equation}\label{eq3}
Mw_1(X_1)+w_2(X_2)= Mt_1\,.
\end{equation}
If $w_1(X_1) \le t_1-1$ then, using~\eqref{ineq5}, we obtain
$$Mw_1(X_1)+w_2(X_2)\le
Mt_1-M+w_2(X_2)<Mt_1\,,$$
a contradiction with~\eqref{eq3}.
Therefore $w_1(X_1) \ge t_1$.
Moreover, if $w_1(X_1) \ge t_1+1$, then
$$Mw_1(X_1)+w_2(X_2)\ge Mt_1+M+w_2(X_2)>Mt_1\,,$$
again contradicting~\eqref{eq3}.
We infer that $w_1(X_1) = t_1$ and consequently $X_1\in E_1$.
Equation~\eqref{eq3} together with $w_1(X_1) = t_1$ implies that
$w_2(X_2)= 0$. Since $w_2$ is positive on all $V_2$, it follows that $X_2 = \emptyset$.
Therefore, we have $X = \{z\}\cup X_1\in E$.

Now, suppose that $z\not\in X$.
In this case,
$$Mw_1(V_1)+t_2 = t = w(X) = w(X_1)+w(X_2) = Mw_1(X_1)+w_2(X_2)\,,$$
which implies
\begin{equation}\label{ineq1}
Mw_1(X_1)+w_2(X_2)= Mw_1(V_1)+t_2\,.
\end{equation}
We must have $X_1 = V_1$ since if there exists a vertex $v\in V_1\setminus X_1$, then
we would have
\begin{eqnarray*}
Mw_1(X_1)+w_2(X_2)&\le& Mw_1(V_1)-Mw_1(v)+w_2(X_2)\\
&\le& Mw_1(V_1)-M+w_2(X_2)\\
&<&Mw_1(V_1)+t_2\,,
\end{eqnarray*}
where the last inequality follows from~\eqref{ineq5} and $t_2\ge 0$.
Therefore, equality~\eqref{ineq1} simplifies to
$w_2(X_2)= t_2$, and consequently there exists a hyperedge $X_2\in E_2$.
This implies that $X = V_1\cup X_2\in E$.

For the converse direction, suppose that $X$ is a subset of $V$ such that
$X\in E$.  We need to show that $w(X)= t$.
We again consider two cases depending on whether $z\in X$ or not.
Suppose first that $z\in X$.
Then $X = \{z\}\cup X_1$ for some $X_1\in E_1$.
Due to the property of $w_1$, we have $w_1(X_1)= t_1$.
Consequently,
\begin{eqnarray*}
w(X) &=& w(z)+w(X_1)\\
&=& M(w_1(V_1)-t_1)+t_2 + Mw_1(X_1)\\
&=& Mw_1(V_1)-Mt_1+t_2 + Mt_1\\
&= & Mw_1(V_1)+t_2~=~t\,.
\end{eqnarray*}
Suppose now that $z\not\in X$.
Then $X = V_1\cup X_2$ for some $X_2\in E_2$.
Due to the property of $w_2$, we have $w_2(X_2)= t_2$.
Consequently,
\begin{eqnarray*}
w(X) &=& w(V_1)+w(X_2) \\
&=& Mw_1(V_1)+w_2(X_2)\\
&= & Mw_1(V_1)+t_2~=~t\,.
\end{eqnarray*}
This shows that we have $w(X)= t$ whenever $X\in E$, and completes the proof.
\end{proof}
\end{document}